%
%
\documentclass[10pt]{article}
\usepackage{amsmath, amsfonts, amsthm, amssymb, verbatim}
\usepackage{graphicx}
\usepackage[mathcal]{euscript}
\usepackage{amstext}
\usepackage{amssymb,mathrsfs}
%
%
\usepackage[ansinew]{inputenc}
\hsize=126mm \vsize=180mm
\parindent=5mm

%
\newcommand {\HH}     {\boldsymbol{H}}
\newcommand {\BB}     {\boldsymbol{B}}
\newcommand {\EE}     {\boldsymbol{E}}
\newcommand {\DD}     {\boldsymbol{D}}
\newcommand {\JJ}     {\boldsymbol{J}}
\newcommand {\FF}     {\boldsymbol{F}}
\newcommand {\UU}     {\boldsymbol{U}}
\newcommand {\hh}     {\boldsymbol{h}}
\newcommand {\jj}     {\boldsymbol{j}}
\newcommand {\ppsi}   {\boldsymbol{\psi}}
\newcommand{\medint}{{\mbox{\vrule height3.5pt depth-2.8pt
         width6pt}\mkern-17mu\int\nolimits}}
\newcommand{\R}{\mathbb R}
\newcommand{\T}{\mathbb T}

\newcommand{\Z}{\mathbb Z}
\newcommand{\nat}{\mathbb N}

\newcommand \be     {\begin{equation}}
\newcommand \ee     {\end{equation}}

\newcommand \del     \partial

\newcommand{\mbf}[1]{{\mathbf{#1}}}
\newcommand{\clg}[1]{{\mathcal{#1}}}

%
%
%
%
%
%

\def\XXint#1#2#3{{\setbox0=\hbox{$#1{#2#3}{\int}$}
\vcenter{\hbox{$#2#3$}}\kern-.5\wd0}}

\DeclareMathOperator\dive {div}

\DeclareMathOperator\tr {tr} 

\newcommand{\curl}{\,{\rm curl}}

\newcommand{\iinte}{\int \! \! \! \int}



\newtheorem{theorem}{Theorem}[section]

 \newtheorem{remark}[theorem]{Remark}
\newtheorem{lemma}[theorem]{Lemma}
\newtheorem{corollary}[theorem]{Corollary}
\newtheorem{definition}[theorem]{Definition}

%
%

 \def\beqs{\begin{eqnarray*}}
 \def\enqs{\end{eqnarray*}}
 \def\beq{\begin{eqnarray}}
 \def\enq{\end{eqnarray}}

\def\bfh{\mbox{\boldmath$h$}}

\def\bfj{\mbox{\boldmath$j$}}
\def\bfa{\mbox{\boldmath$a$}}
\def\bfe{\mbox{\boldmath$e$}}
\def\bfv{\mbox{\boldmath$v$}}
%

\begin{document}
\title{Well-posedness of the Cauchy Problem on Torus to Electromagnetoelastic System}
\author{Wladimir Neves$^1$, Viatcheslav Priimenko$^2$, Mikhail Vishnevskii$^3$}

\date{}

\maketitle

\footnotetext[1]{ Institute of Mathematics, Federal University of
Rio de Janeiro, C.P. 68530, Cidade Universit\'aria, 21945-970, Rio
de Janeiro, Brazil. E-mail: {\sl wladimir@im.ufrj.br.}}
\footnotetext[2] {Laboratory of Petroleum Engineering and Exploration, North Fluminense State University Darcy Ribeiro,
Rod. Amaral Peixoto, km 163, Imboacica, Maca\'e, RJ, 27925-310, Brazil. E-mail: {\sl slava@lenep.uenf.br.}}
\footnotetext[3]{ Laboratory of Computational Mathematics, North Fluminense State University Darcy Ribeiro, av. Alberto Lamego, 2000
Campos dos Goytacazes, RJ, 28013-602, Brazil. E-mail: {\sl mikhail@uenf.br.}
\newline
\textit{To appear in:}
\newline
\textit{Key words and phrases.} Electromagnetoelastic coupling;
Nonlinear model; Cauchy problem on torus; Well-posedness.}

%
%
\begin{abstract}
We prove the well-posedness of the Cauchy problem on torus to an
eletromagnetoelastic system. The physical model consists of three
coupled partial differential equations, one of them is a
hyperbolic equation describing the elastic medium and two other
ones form a parabolic system, which comes from Maxwell's
equations. Experimental measurements suggest that the elastic
medium has a periodic structure, moreover with finite number of
discontinuities on the fundamental domain. Thus we have study in
this paper the problem which we have defined as periodically
Cauchy diffraction problem.
\end{abstract}
%

\maketitle

\tableofcontents

\section{Introduction} \label{s.1}

In this paper we provide a general framework leading to
electromagnetoelastcity theory. In particular, this general theory
encompass the two most important models given by a quasilinear and
a semi-linear system of partial differential equations, described
respectively by equations (see below) \eqref{EME}, \eqref{EMNLCR},
\eqref{NLCRS}, \eqref{LEMEE}, \eqref{6} and, \eqref{EME},
\eqref{EMLCR}, \eqref{LCRS}, \eqref{LEMEE}, \eqref{6}. Moreover,
considering the semi-linear case, and standing to plane waves, we
have proved existence of weak solution, uniqueness and stability,
therefore established the well-posedness of the periodically
Cauchy diffraction problem, see Section \ref{IVP}. In fact, it
seems that the notion of periodically Cauchy diffraction problem
is one of the most realistic one as geophysical experiments
suggest, that is, the medium has a natural periodicity with a
finite number of discontinuities on the fundamental domain. Then,
it is more physical correct to assume that the medium is periodic
in space.

\bigskip The mathematical electromagnetoelasticity theory describes
the interacting effects of an elastic solid medium and an
electromagnetic field applied on it. Thus deformations
experimented by the elastic solid are due to external
electromagnetic forces. More precisely, if an elastic
electroconductive medium is imbedded in an electromagnetic field,
then the elastic waves propagating through the medium will excite
oscillations of the electromagnetic field and themselves will
change under influence of the latter. Moreover, the waves which
arise as a result of such an interaction are called as { \em
electromagnetoelastic waves\/}. We stress that, the first attempts
to apply the theory of electromagnetoelasticity to the
investigation of the wave propagation process in electroconductive
media were made by Knopoff \cite{K}, Chadwick \cite{C}, Dunkin and
Eringen \cite{DE}.

\bigskip
In fact, because of the importance of the applications, in
particular to geophysics applied in seismology, the petroleum
reservoir research, the theory of electromagnetoelasticity have
been developed fast recently. We address  some of the important
mathematical works on the propagation of electromagneticelastic
waves, those are: Avdeev, Goryunov, Soboleva, and Priimenko
\cite{AGSP}, Lorenzi and Priimenko \cite{LoP}, Lorenzi and Romanov
\cite{LR}, Priimenko and Vishnevskii \cite{PV1, PV2}, Romanov
\cite {R1,R2}.
%

\bigskip
\section{Non-linear electromagnetoelasticity
theory}\label{ss.1.1}

The models of electromagnetism and elasticity could be given in a
rational continuum physics way through the respective stored
energy functions. First, let us see the electromagnetism theory,
where we follow the Coleman and Dill work, see \cite{CD}. Let
$(t,x) \in \R \times \R^3$ be the time-space domain and, we
consider the magnetic intensity field $\HH$, the electric
intensity field $\EE$, the magnetic induction $\BB$, the electric
induction $\DD$, all of them taking values in $\R^3$. Moreover, we
consider the electromagnetic store energy function
$\psi(\DD,\BB)$, thus we have
\begin{equation} \label{EME}
\begin{aligned}
 \partial_t \DD - \curl_x (\partial_{\BB} \psi)&= - \JJ, \\[5pt]
 \partial_t \BB + \curl_x (\partial_{\DD} \psi)&= 0, \\[5pt]
 \dive_x \DD= \rho_e, \dive_x \BB&= 0,
\end{aligned}
\end{equation}
where $\JJ$ is the induced electric current density field, also
taking values in $\R^3$, and $\rho_e$ is the electric charge
density. Further, we obtain from the stored energy function the
following constitutive relations
$$
  \partial_{D_i} \psi(\DD,\BB)=:E_i, \quad \partial_{B_i} \psi(\DD,\BB)=: H_i \quad \quad (i=1,2,3).
$$
We recall that, the first and second equations in \eqref{EME} are
respectively the Ampere and Faraday's Law, and the last ones are
constrains, which are compatible with the first two ones. In the
linear theory of electromagnetism, called Maxwell's equations, we
have
\begin{equation}
\label{EMLCR}
  \DD= \epsilon \EE \quad \text{and} \quad \BB= \mu_e \HH,
\end{equation}
where $\epsilon, \mu_e$ are respectively the dielectric and
permeability tensors. There are many reasons to avoid the linear
case, for instance the singularity at the origin on the electric
field $\EE$. Therefore, some non-linear models have been proposed
and the most famous one is due to M. Born and L. Infeld, see
\cite{BI}. In this model the electromagnetic stored energy
function is given by
\begin{equation}
\label{EMNLCR}
  \psi(\DD,\BB)= \sqrt{ 1 + |\DD|^2 + |\BB|^2 + |\DD \times \BB|^2}.
\end{equation}

\bigskip

Now, we turn our attention to the elastic non-homogeneous medium.
Let $\UU(t,x)$ be the elastic displacement field taking values in
$\R^3$. For elastic materials, the constitutive relation of the
stress tensor is given by
$$
  \mbf{T}= \mbf{T}(\mbf{F}),
$$
where $\mbf{F}$ is the gradient deformation of the solid,
satisfying $\det \mbf{F}>0$. Moreover, considering isothermal
deformations, which means that, the temperature of the medium is
assumed constant and uniform throughout the entire solid, the
stress tensor $\mbf{T}$ could be written as
\begin{equation}
\label{NLCRS}
  \mbf{T}(\mbf{F}) + \pi \; I_d = \clg{T}(\mbf{F})= \rho \frac{\partial \Psi(\mbf{F})}{\partial \mbf{F}} \mbf{F}^T,
\end{equation}
where $\pi$ is a constant, $\rho$ is the mass function per unit volume of the solid, and $\Psi$ is the elastic stored energy function. Furthermore, we have assumed that the elastic material is incompressible. Moreover, considering that the medium is locally isotropic and standing for linear elasticity, we could write
\begin{equation}
\label{LCRS}
  \clg{T}= \lambda \, \tr(\mbf{S}) \, I_d + 2 \, \mu \, \mbf{S},
\end{equation}
where $\lambda$, $\mu$ are scalar functions called the Lame elastic moduli and $\mbf{S}$ is the infinitesimal strain tensor, given by
$$
  \mbf{S}:= \frac{1}{2}(\nabla \UU + \nabla \UU^T).
$$
We recall that \eqref{LCRS} is also known as Hooke's law.
Therefore, from the Cauchy's first law and the Hooke's law, we
have the following equation describing the evolution of the linear
elastic electrically-magnetic conduction solid
\begin{equation}\label{LEMEE}
    \rho \, \partial^2_{tt}{\UU}= \dive_x \mathcal{T} + \FF^b,
\end{equation}
where $\FF^b:= \JJ \times \BB + \rho_e \EE + \FF^s$ is the body field force per unit mass, more precisely $\JJ \times \BB$ is the electric-magnetic part that retards the motion of the solid, $\rho_e \EE$ stands the part of the body force due to the existence of the charge density $\rho_e$, and finally $\FF^s$ are others body forces, for instance gravitational effects on the solid.

\medskip
Therefore, we have seen that the electromagnetic field influences
the elastic field by entering the elastic stress equations of
motion as a body force called Lorentz's ponderomotive force. Now,
we are going to establish that the elastic field turn influences
in the electromagnetic field by modifying the Ohm's law. Indeed,
in a moving conductor medium the current is determined by Ohm's
law, see \cite{K}, that is
\begin{equation}\label{6}
    \JJ= \sigma \; \big(\EE + \partial_t \UU \times \BB \big)+\rho_e \; \partial_t \UU + \FF^e,
\end{equation}
where $\sigma$ is the electrical conductivity tensor field and $\FF^e$ is an external electromagnetic force. Then, we see that the current distribution is modified by the elastic deformations. Thus the interaction between the elastic field and electromagnetic field is expressed through equations \eqref{LEMEE} and \eqref{6}.

\medskip
Now, we are in position to establish the basis of the
electromagnetoelasticity theory. In fact, equations \eqref{EME},
\eqref{EMLCR}, \eqref{LCRS}, \eqref{LEMEE} and \eqref{6} form a
semi-linear system of partial differential equations and,
\eqref{EME}, \eqref{EMNLCR}, \eqref{NLCRS}, \eqref{LEMEE} and
\eqref{6} a quasi-linear one, which is much more complicated,
since shocks are allowed to exist, see Dafermos \cite{D}, and for
instance, Neves \& Serre \cite{NS} for the nonlinear Maxwell's
equations setting. In this paper we consider the semi-linear case
and in other to solve the problem both mechanical and
electromagnetic data should be given. We stress that even in the
semi-linear setting, the nonlinearity involved is non-trivial and
difficult to hand up.

\medskip
Finally, we observe that the theories of electromagnetoelasticity have been extended in various ways. For more acquaintance with the modern state of the theory, the reader is addressed to, e.g., \cite{B,EM}.

\medskip
An outline of this paper follows. In the rest of this section we fix the notation and give some mathematical definitions, which will be used during the paper. In Section \ref{s.2}, we establish our problem, where we give the exact notion of Periodically Cauchy diffraction problem. Finally, Section \ref{s.3} presents the well-posedness result for the problem proposed at the preceding section.

\bigskip
\subsection{Functional notation and background} \label{FN}

At this point we fix some functional spaces, which will be used in the paper. For convenience, we follow the notation of Ladyzhenskaia, Solonnikov and Uralceva, see \cite{LSU}. Although, we will be in the periodic setting, which is not the case in \cite{LSU}. Therefore, we address also \cite{VK} and \cite{SW}.

\medskip
We will concern periodic functions in $\R$ (i.e., in the spacial domain). For simplicity, we take the fundamental period to be one. So, we define $\T:= \R \setminus \Z$ to be the 1-dimensional torus, it means that, a scalar periodic function $f: \T \to \R$, satisfies
$$
  f(x+\kappa)= f(x) \quad \text{for each $x \in \R$ and $\kappa
  \in \Z$}.
$$
Moreover, we recall that a periodic function is completely determined by its values in the fundamental domain, here we take $\Omega:= [\,0,1)$. Therefore, periodic functions on $\R$ will be considered as functions on $\T$ or functions on $\Omega$. In fact, the point of view depends only on the context.

\medskip
By $dz$, we denote the Lebesgue measure on $\R$ and for measure-theoretic purposes, we do not distinguish between the measure induced on $\T$ and the Lebesgue measure, that is, if $f$ is any function on $\T$, we define
$$
  \int_\T f dz:= \int_\Omega f dz
$$
when the right hand side makes sense; indeed a function $f$ on $\T$ is measurable when the corresponding function on $\Omega$ is Lebesgue measurable. The same definition could be obtained with the concept of periodic distributions and a periodic partition of unity, see Vo-Khac Khoan  \cite{VK}, Tome 2.

\medskip
The Banach space $L_q(\T)$ consists of all measurable functions on $\T$ that are $q$th-power ($q\geq1$) summable on $\T$ provided with the norm
$$
  \|v\|_{q,\T}= \big(\int_{\T}|v(z)|^q \, dz \big)^{1/q}.
$$

\medskip
For some $T>0$, we set $\Pi_T:= (0,T) \times \T$ and analogously $Q_T:= (0,T) \times \Omega$. The Banach space
$L_{q,\gamma}(\Pi_T)$, $(q, \gamma\geq 1)$, consists of all measurable functions on $\Pi_T$ with a finite norm
$$
  \|v\|_{q,\gamma,\Pi_T}= \big(\int^T_0(\int_{\T}|u(t,z)|^qd z)^{\frac{\gamma}{q}}d t \big)^{1/\gamma}.
$$
Moreover, when $q=\gamma$ the Banach space $L_{q,q}(\Pi_T)$ will be denoted by $L_q(\Pi_T)$, and the norm $\|v\|_{q,q,\Pi_T}$ - by
$\|v\|_{q,\Pi_T}$.

\medskip
Weak (generalized) derivatives should be understood in the customary way, we address the book of Evans, \cite{E}. For $l$ an integral and $q \ge 1$, we denote by:

\medskip
$W_q^l(\T)$,
the Banach space consisting of all functions of $L_q(\T)$ having weak derivatives of all forms up to order $l$ inclusively, that
are $q$th-power summable on $\T$. The norm in $W^l_q(\T)$ is defined by the equality
$$
\|v\|_{q,\T}^{(l)}=\sum_{s=0}^l\|\partial^s_z v\|_{q,\T}\,.
$$

\medskip
$\stackrel{o}W^l_q(\T)$, the closure in $W_q^l(\T)$ of all functions that are infinitely differentiable and finite in $\T$.

\medskip
$W^{2l,l}_q(\Pi_T)$, the Banach space consisting of the $L_q(\Pi_T)$-elements having weak derivatives of the form
$D^r_tD^s_z$ with any  $r,s$ satisfying the inequality $2r+s\leq2l$. The norm in it is defined by the equality
$$
\|v\|^{(2l)}_{q,\Pi_T}=\sum_{j=0}^{2l}\sum_{2r+s=j}\|\partial^r_t\partial^s_z v\|_{q,\Pi_T}\,,
$$
where the summation $\sum_{2r+s=j}$ is taken over all nonnegative integers $r$ and $s$ satisfying the condition $2r+s=j$.

\medskip
$W^{1,k}_2(\Pi_T)$, $(k=0,1)$, the Hilbert space endowed with the scalar product
$$
(u,v)_{W^{1,k}_2(\Pi_T)}=\iinte_{\Pi_T}(uv+u_zv_z + k \, u_tv_t) \, dz dt \,.
$$
\medskip
$V_2(\Pi_T)$, the Banach space consisting of all $W^{1,0}_2(\Pi_T)$-elements having a finite norm
$$
|v|_{\Pi_T}=\mbox{vrai}\max_{t\in[0,T]}\|v\|_{2,\T}+\|v_z\|_{2,\Pi_T}\,,
$$
where here and below
$$
\|v_z\|_{2,\Pi_T} = \big(\iinte_{\Pi_T}(v_z)^2 \, dzdt \big)^{1/2}\,.
$$

$V^{1,0}_2(\Pi_T)$, the Banach space obtained by completing the set $W^{1,1}_2(\Pi_T)$ in the norm of $V_2(\Pi_T)$.

\medskip
$V^{1,1/2}_2(\Pi_T)$, the subset of those elements $v \in V^{1,0}_2(\Pi_T)$, such that
$$
\int_0^{T-\tau}\int_{\T}\tau^{-1}(v(z,t+\tau)-v(z,t))^2 \, dzdt \rightarrow0\; \mathrm{as}\,\tau\rightarrow0\,.
$$
\begin{remark}
$C^{\alpha,\alpha/2}(Q_T)$ is the set of all continuous functions in $\overline{Q}_T$ satisfying H\"{o}lder conditions in the spacial variable $z$ with exponent $\alpha$ and in the time variable $t$ with exponent $\alpha/2$. Following Stein \& Weiss \cite{SW}, we remark that $C(\T)$ does not correspond to the class of continuous functions on $\Omega$, but only to those functions which remain continuous on $\R$ when extended by periodicity. Therefore, $C^{\alpha,\alpha/2}(\Pi_T)$ should be understood in this sense. Although, the spaces $L_q(\Omega)$, $L_{q,\gamma}(Q_T)$, etc. are defined in a similar way as above, where we replace respectively $\T$ by $\Omega$ and $\Pi_T$ by $Q_T$.
\end{remark}

\bigskip
\section{Statement of the problem}\label{s.2}

\subsection{Plane waves}

We are going to focus on plane waves, which depend only on the one scalar space variable and time. Let us consider $x_3$, the such spacial coordinate. Therefore, all of the fields involved in equations \eqref{EME}, \eqref{EMLCR}, \eqref{LCRS}, \eqref{LEMEE} and \eqref{6} depend on $(t,0,0,x_3) \equiv (t,x_3)$ variables. Moreover, the vector fields $\FF^e$ and $\FF^s$ have the following representations
$$
   \FF^e=(F^e_1,F^e_2,0),\qquad \FF^s=(0,0,F^s_3),
$$
where $F^e_1$, $F^e_2$ and $F^s_3$ are scalar functions and, from now on, $z$ stands for the variable $x_3$.

In the case of diffusion electromagnetic processes the time derivative of $\DD$ in the Ampere's Law is very small in comparison with the conduction current $\JJ$ and, and in such way could be dropped. Further, we assume that $\rho=\mathrm{const}, \mu_e=\mathrm{const}$, $\sigma$ is a scalar function and the charge density $\rho_e=0$, that is, the media satisfies the quasi-neutrality condition. Then, considering these
assumptions, we have from \eqref{EME}, \eqref{EMLCR}, \eqref{LCRS}, \eqref{LEMEE} and \eqref{6} the following system of semi-linear partial differential equations
\begin{align}
      \partial_t H_1&= \partial_z \big(\frac{1}{\sigma\mu_e} \partial_z H_1
      - H_1 \, \partial_t U - \frac{1}
      {\sigma\mu_e} F_1^e), \label{EQ1} \\[5pt]
      \partial_t H_2&= \partial_z \big(\frac{1}{\sigma\mu_e} \partial_z H_2 - H_2 \, \partial_t U - \frac{1}
      {\sigma\mu_e} F_2^e), \label{EQ2} \\[5pt]
      \partial^2_{tt}{U}&=\partial_z \big(\upsilon^2 \, \partial_z U - \frac{\mu_e}{2\rho}(H_1^2+H_2^2) \big)
      +\frac{1}{\rho}F^s_3, \label{EQ3}
\end{align}
where $\upsilon:= \sqrt{(\lambda+2\mu)/\rho}$ is the longitudinal elastic wave velocity and $U:= U_3$.
For simplicity of notation and similarity purposes, we will form a dimensionless system of equations. Let $L$, $V_0$ and $H_0$ be the
characteristic values of length, velocity and magnetic field, respectively. We set,
$$
  r:= \frac{1}{\mu_e \, L \, V_0 \, \sigma}, \quad   p:= \frac{\mu_e \, H_0^2}{2 \, \rho \, V_0^2}, \quad
  \nu:= \frac{\upsilon}{V_0},
$$
respectively the first and third one, the dimensionless magnetic viscosity and the dimensionless velocity of the elastic waves propagation. Then, after simple transformations, we obtain from \eqref{EQ1}--\eqref{EQ3}
\begin{align}
\label{9}
  \hh_t&= \big( r \, \hh_z - \hh \, u_t - r \jj \big)_z,\\
\label{10}
   u_{tt}&= \big( \nu^2 \, u_z - p \, \hh^2 \big)_z + f,
\end{align}
where $\hh:=(h_1,h_2)$, $\jj:=(j_1,j_2)$, $u$ and $f$ are respectively the dimensionless analogues of
$$
  H_i, F^e_i, \quad(i=1,2), \quad U, \quad \frac{1}{\rho} F^s_3\,,
$$
and $\hh^2\equiv\hh\cdot\hh=h_1^2+h_2^2$.

\bigskip
\subsection{Periodically Cauchy diffraction problem} \label{IVP}

In this section, we state our initial-value problem on torus, that is, we consider equations \eqref{9}--\eqref{10} in $\Pi_T$. We assume that $p$ is a positive number, and $r(z)$, $\nu(z)$ are positive bounded 1-periodic functions, piecewise smooth and discontinuous at each point $z= z_k + \kappa$, $(k= 1,2,\dots,m; \; m \in \nat)$, $(\kappa \in \Z)$, with
$$
  0 < z_1 <z_2 < \ldots < z_m < 1.
$$
The system \eqref{9}--\eqref{10} of partial differential equations is supplemented by the initial-data
\begin{align}
\label{11}
   \hh= \hh_0 \quad \text{on $\{0\}\times\T $}&,\\
\label{12}
   u=u_0,\; u_t=u_1 \quad \text{on $\{0\}\times\T$}&,
\end{align}
where $\hh_0$, $u_0$ and $u_1$ are given 1-periodic smooth functions.

\medskip
Now, since the coefficients in \eqref{9}--\eqref{10} have $m$ discontinuities in $\Omega$, we are going to say that \eqref{9}--\eqref{12} form a periodically Cauchy diffraction problem, which is defined in analogy with one at Ladyzhenskaia, Solonnikov and Uralceva, see \cite[pp.~224-232]{LSU}. Thus for each $k=0,1,\ldots,m$, we set $\Omega^k:= [\, z_k,z_{k+1})$, with $z_0= 0$ and $z_{m+1}=1$. Therefore, we have
$$
  \Omega= \bigcup_{k=0}^{m} \Omega^k \quad \text{and} \quad Q_T= \bigcup_{k=0}^{m} Q_T^k,
$$
where $Q_T^k= (0,T) \times \Omega^k$. It follows that, in each domain $Q_T^k$ there is given a parabolic-hyperbolic system with smooth coefficients and free terms. One of the main purposes is to find in $\Pi_T$ a (weak) solution $(\hh,u)$ of this system, satisfying:
\begin{itemize}
    \item in $Q^{k}_T, k=1,2,\dots,m$, the corresponding equations \eqref{9}--\eqref{10};
    \item on the lower base of $\Pi_T$ the initial condition \eqref{11}--\eqref{12};
    \item at the jump points  $z \equiv z_k + \kappa$, the following compatibility conditions,
   \begin{align}
      \label{15}
      &\left[\bfh\right]=\mathbf{0},\quad\left[u\right]=0,\\
      \label{16}
      &\left[r(\hh_z-\bfj)\right]=\mathbf{0},\quad\left[\nu^2u_z\right]=0,
  \end{align}
\end{itemize}
where $\left[v\right]$ denotes the jump of the function $v$ as it passes through a discontinuous point.

\bigskip
\begin{remark}
\label{RIBVP} Here, we observe our strategy to show existence of solution to periodically Cauchy diffraction problem \eqref{9}--\eqref{16}. First, we are going to consider the associated initial-boundary value problem given by the equations \eqref{9}--\eqref{10} posed in $Q_T$, the initial-data \eqref{11}--\eqref{12} on $\{0\} \times \Omega$ and the jump conditions \eqref{15}--\eqref{16}. Moreover supplemented with the
following boundary conditions
\begin{align}
\label{BC1}
  \hh(t,0)= \hh(t,1), \quad
  \hh_z(t,0)= \hh_z(t,1), \\[5pt]
  \label{BC2}
  u(t,0)= u(t,1), \quad
  u_z(t,0)= u_z(t,1).
\end{align}
Once we have constructed the existence of solution to \eqref{9}--\eqref{BC2} in $\clg{D}'(Q_T)$, we extent it to $\Pi_T$ by periodicity and show that it is a solution to the Cauchy problem \eqref{9}--\eqref{16} in $\Pi_T$, as the definition given below.
\end{remark}

\bigskip
The following definition tells us in which sense a pair $(\hh,u)$ is a weak solution to (\ref{9})--(\ref{16}).

\begin{definition}
\label{def1} A pair of 1-periodic functions in the $z$-spacial variable
$$
  \Big(\hh \in V_{2}(\Pi_T), \quad u \in W^{1,1}_{2}(\Pi_T) \Big)
$$
is called a weak solution of the initial-value problem \eqref{9}--\eqref{16} if satisfies the identities
\begin{align}
   &- \iinte_{\Pi_T} h_l \, \eta_t \, d z d t + \iinte_{\Pi_T} r \, h_{lz} \, \eta_z \, d z d t
   - \iinte_{\Pi_T} h_l \, u_t \, \eta_z \, d z d t \nonumber\\[-1.5ex]
   \label{17}\\[-1.5ex]
   &= \iinte_{\Pi_T} r \, j_l \, \eta_z \, d z d t + \int_{\T} h_{l0} \, \eta(0) \, d z,  \quad (l=1,2),\nonumber
\end{align}
\begin{align}
   &- \iinte_{\Pi_T} u_t \, \zeta_t \, dz d t + \iinte_{\Pi_T} \nu^2 \, u_z \, \zeta_z \, d zd t
   + \iinte_{\Pi_T} p \, (\hh^2)_z \, \zeta \, d z d t\nonumber\\[-1.5ex]
   \label{18}\\[-1.5ex]
   &= \iinte_{\Pi_T}f\zeta \, d z d t + \int_{\T} u_1 \, \zeta(0) \,d z, \quad u(0)= u_0,\nonumber
\end{align}
for all $\eta,\zeta \in {W}^{1,1}_{2}(\Pi_T)$, which are equal to zero for $t=T$.
\end{definition}

Moreover, it is also possible to define the weak solution of \eqref{9}--\eqref{16} in a way somewhat differently.

\begin{definition}
\label{def2} A pair of 1-periodic functions in the $z$-spacial variable
$$
  \Big( \hh \in V_{2}(\Pi_T), \quad u \in W^{1,1}_{2}(\Pi_T) \Big)
$$
is called a weak solution of the initial-value problem \eqref{9}--\eqref{16} if satisfies, for almost all $t_1 \in [0,T]$, the identities
\begin{align}
   &-\int_0^{t_1} \! \! \! \int_\T  h_l \, \eta_t \, dz dt + \int_0^{t_1} \! \! \! \int_\T r h_{lz} \, \eta_z \, dz dt
   - \int_0^{t_1} \! \! \! \int_\T h_l \, u_t \, \eta_z \, dz dt \nonumber\\[-1.5ex]
   \label{19}\\[-1.5ex]
   &= \int_0^{t_1} \! \! \! \int_\T r\, j_l \, \eta_z \, d z d t + \int_{\T}h_{l0} \, \eta(0)\, dz
   - \int_{\T} h_l(t_1) \, \eta(t_1) \, d z, \,(l=1,2),\nonumber
\end{align}
\begin{align}
   &- \int_0^{t_1} \! \! \! \int_\T u_t \, \zeta_t \, dz dt + \int_0^{t_1} \! \! \! \int_\T \nu^2 \, u_z \, \zeta_z \, dz dt + \int_0^{t_1} \! \! \! \int_\T p \, (\hh^2)_z \, \zeta \, dz \, dt \nonumber\\[-1.5ex]
   \label{20}\\[-1.5ex]
   &= \int_0^{t_1} \! \! \! \int_\T f \, \zeta \, dz dt + \int_{\T} u_1 \, \zeta(0) \, dz - \int_{\T}u_t(t_1) \,  \zeta(t_1) \, dz, \quad u(0)= u_0, \nonumber
\end{align}
for any $\eta, \zeta \in {W}^{1,1}_2({\Pi_T})$.
\end{definition}
Both definitions are equivalent. The fulfillment of transmission conditions \eqref{16} is understand in the sense of the identities considered in Definition \ref{def1}.

\bigskip
\section{Well-posedness}\label{s.3}

The main focus of this section is to establish in the sense of Hadamard the well-posedness of the periodically Cauchy diffraction problem \eqref{9}--\eqref{16}, that is, we prove existence of a solution, show the uniqueness and the continuous dependence in all the parameters and data.

\subsection{Existence of weak solution}\label{EWS}

In this section we show a week solution of the periodically Cauchy diffraction problem \eqref{9}--\eqref{16}.
\begin{theorem}$(${\bf Existence theorem}$)$.
\label{TIVP} Let $\hh_0 \in C^\alpha(\T)$, $\big(\alpha \in
(0,1)\big)$ and $(u_0, u_1) \in W_2(\T) \times L_2(\T)$ be given
initial-data to the Cauchy problem \eqref{9}--\eqref{16}.
Moreover, assume that the constant $p$, the functions $r, \nu$ and
the free members $\jj,f,$ satisfy the properties:
\begin{list}{}{\parsep=0pt \itemsep=1ex \topsep=1ex}
\item[(a)] $r, \nu,\bfj,f$ are 1-periodic functions, piecewise smooth, bounded and discontinuous at each point $z= z_k + \kappa$,
$(k= 1,2,\dots,m; \; m \in \nat)$, $(\kappa \in \Z)$, with
$$
  0 < z_1 < z_2 < \ldots < z_m < 1;
$$
\item[(b)] $p$ is a positive number and there exist $0<r_0, \nu_0, r_1, \nu_1 < \infty$, such that
$$
  r_0 \leq r \leq r_1 \quad \text{and} \quad \nu_0 \leq \nu\leq \nu_1.
$$
\end{list}
Then, the Cauchy problem \eqref{9}--\eqref{16} has a weak solution
$$
   \Big(\hh \in V_2(\Pi_T),\quad u \in W^{1,1}_2(\Pi_T) \Big).
$$
\end{theorem}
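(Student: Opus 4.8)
\medskip
\noindent\emph{Plan of proof.}\quad The idea is a Faedo--Galerkin construction carried out directly on the torus, so that the passage from $Q_T$ to $\Pi_T$ foreseen in Remark~\ref{RIBVP} is automatic. Let $\{e_k\}_{k\ge0}$ be the real trigonometric basis of $L_2(\T)$ ($e_0\equiv1$, $e_{2k-1}=\cos2\pi kz$, $e_{2k}=\sin2\pi kz$), put $X_N=\mathrm{span}\{e_0,\dots,e_N\}$, and let $P_N$ be the $L_2(\T)$-projection onto $X_N$. I would look for $\hh^N(t,\cdot)$ with both components in $X_N$ and $u^N(t,\cdot)\in X_N$ solving the ODE system obtained by testing the weak forms of \eqref{9}--\eqref{10} against each $e_k$, $k\le N$, with $\hh^N(0)=P_N\hh_0$, $u^N(0)=P_Nu_0$, $u^N_t(0)=P_Nu_1$. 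Writing $v^N:=u^N_t$, this is a first--order system for the Fourier coefficients of $(\hh^N,u^N,v^N)$ with quadratic nonlinearities coming from the terms $\hh\,u_t$ and $\hh^2$ and with data $\jj,f\in L_2(\Pi_T)$; the discontinuities of $r,\nu$ are immaterial here, since only the bounds of hypothesis~(b) and the constants $\int_\T r\,e_j'e_k'\,dz$, $\int_\T\nu^2e_j'e_k'\,dz$ enter. The Carath\'eodory existence theorem gives a solution on a maximal time interval.

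The crux is an a priori bound uniform in $N$. I would test the $\hh^N$--equation by $\hh^N$ and the $u^N$--equation by $v^N$ (both admissible, as they lie in the Galerkin space) and work with
\[
  \mathcal E^N(t):=p\,\|\hh^N(t)\|_{2,\T}^2+\tfrac12\,\|v^N(t)\|_{2,\T}^2+\tfrac12\int_\T\nu^2\,(u^N_z(t))^2\,dz .
\]
A short computation gives
\[
  \frac{d}{dt}\mathcal E^N=-2p\int_\T r\,(\hh^N_z)^2\,dz+\mathcal R^N+2p\int_\T r\,\jj\cdot\hh^N_z\,dz+\int_\T f\,v^N\,dz ,
\]
where $\mathcal R^N:=p\int_\T\big(((\hh^N)^2)_z\,v^N+(\hh^N)^2\,v^N_z\big)\,dz$ collects the two cubic coupling terms, one issued from $\hh\,u_t$ in \eqref{9} and one from $p\,\hh^2$ in \eqref{10}. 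The decisive observation --- and the reason the magnetic energy must carry the weight $p$ rather than $\tfrac12$ --- is that $\mathcal R^N=\int_\T\partial_z\big(p\,(\hh^N)^2\,v^N\big)\,dz=0$ by periodicity; with any other weight a cubic remainder would survive that the dissipation cannot absorb. The rest is routine: $-2p\int_\T r(\hh^N_z)^2\le-2pr_0\,\|\hh^N_z\|_{2,\T}^2$, Cauchy--Schwarz absorbs $2p\int_\T r\,\jj\cdot\hh^N_z$ into half of this dissipation (using $r\le r_1$), $\int_\T f\,v^N\le\tfrac12\|f\|_{2,\T}^2+\mathcal E^N$, and Gr\"onwall's inequality yields
\[
  \sup_{t\in[0,T]}\mathcal E^N(t)+\int_0^T\!\!\int_\T(\hh^N_z)^2\,dz\,dt\le C ,
\]
with $C$ depending only on $p,r_0,r_1,T$ and the data norms of $\hh_0,u_0,u_1,\jj,f$. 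This forbids blow--up, so the Galerkin solutions are global, $\{\hh^N\}$ is bounded in $V_2(\Pi_T)$, and $\{u^N\},\{u^N_t\},\{u^N_z\}$ are bounded in $L_\infty(0,T;L_2(\T))$.

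For the limit I would invoke compactness. Since $W^1_2(\T)\hookrightarrow C(\T)$ in one space dimension, $\{\hh^N\}$ is bounded in $L_2(0,T;C(\T))$; hence $\{\hh^N v^N\}$ is bounded in $L_2(\Pi_T)$ and, from the $\hh^N$--equation, $\{\hh^N_t\}$ is bounded in $L_2(0,T;W^{-1}_2(\T))$. Aubin--Lions then gives a subsequence with $\hh^N\to\hh$ strongly in $L_2(\Pi_T)$; interpolating the $L_\infty(0,T;L_2(\T))$ and $L_2(0,T;C(\T))$ bounds shows $\{\hh^N\}$ bounded in $L_4(\Pi_T)$, whence $(\hh^N)^2\rightharpoonup\hh^2$ weakly in $L_2(\Pi_T)$ while $\hh^N v^N\rightharpoonup\hh\,u_t$ (strong times weak). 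I would then pass to the limit in the Galerkin identities, first against finite trigonometric test functions with $C^1$ time dependence vanishing at $t=T$ --- integrating by parts in $t$ brings down $\hh^N(0)=P_N\hh_0\to\hh_0$ and $v^N(0)=P_Nu_1\to u_1$ --- and then extend to arbitrary $\eta,\zeta\in W^{1,1}_2(\Pi_T)$ vanishing at $t=T$ by density, obtaining a pair $\big(\hh\in V_2(\Pi_T),\ u\in W^{1,1}_2(\Pi_T)\big)$ satisfying \eqref{17}--\eqref{18}, i.e.\ a weak solution in the sense of Definition~\ref{def1}. Since the whole construction is $1$--periodic in $z$, this is simultaneously a solution of the periodically Cauchy diffraction problem: the transmission conditions \eqref{15}--\eqref{16} are not extra requirements, for $[\hh]=[u]=0$ because $\hh\in L_2(0,T;W^1_2(\T))$ and $u\in L_\infty(0,T;W^1_2(\T))$ admit $z$--continuous representatives, and $[r(\hh_z-\jj)]=[\nu^2u_z]=0$ is exactly the flux continuity built into \eqref{17}--\eqref{18} when $\eta,\zeta$ straddle a jump point.

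The main obstacle is the nonlinear coupling, and it shows up at precisely two places. In the a priori estimate, the cubic terms $\int_\T u^N_t\,(\hh^N\cdot\hh^N_z)\,dz$ and $\int_\T(\hh^N)^2 u^N_{tz}\,dz$ are each beyond the reach of $\mathcal E^N$ and its dissipation, and the whole estimate hinges on recognising that, with the right weight on the magnetic energy, their sum is an exact $z$--derivative and therefore integrates to zero on $\T$ --- the analytic counterpart of the fact that the Ohm's--law drift $\hh\,u_t$ in \eqref{9} and the Maxwell--stress term $p\,\hh^2$ in \eqref{10} together conserve the electromagnetoelastic energy. In the limit passage one must upgrade weak to strong $L_2(\Pi_T)$--convergence of $\hh^N$ to identify $\hh^2$ and $\hh\,u_t$, and this is where one--dimensionality ($W^1_2(\T)\hookrightarrow C(\T)$, feeding the Aubin--Lions compactness) is used essentially. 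The remaining ingredients --- the ODE theory, Gr\"onwall, the density arguments, reading off the transmission conditions --- are routine. I note in passing that only $\hh_0\in L_2(\T)$ is in fact needed for this construction; the H\"older hypothesis $\hh_0\in C^\alpha(\T)$ is what one uses afterwards for the parabolic regularity of $\hh$ and for the uniqueness part of the well--posedness, not for existence in $V_2(\Pi_T)$.
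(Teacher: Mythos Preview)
Your proof is correct and follows essentially the paper's own Faedo--Galerkin argument: the same energy functional with the magnetic part weighted by $p$, the same cubic cancellation you emphasise (which is implicit in the paper's identity \eqref{24}), and Aubin--Lions for strong $L_2$ compactness of $\hh^N$ to pass to the limit in the two nonlinear terms. The paper works on $\Omega$ with an abstract periodic basis and then extends, closes the energy estimate by a short bootstrap rather than Gr\"onwall, and adds one substantive step you omit: invoking Theorems~8.1 and~10.1 of \cite{LSU} to obtain $\hh\in L_\infty(\Pi_T)\cap C^{\alpha_1,\alpha_1/2}(\Pi_T)$ from the hypothesis $\hh_0\in C^\alpha(\T)$ --- which, as you correctly note, is not needed for existence in $V_2(\Pi_T)$ but is used in the uniqueness argument that follows.
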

\begin{proof}
1. First, following the strategy given at Remark \ref{RIBVP}, we consider the system given by \eqref{9}--\eqref{10} in $\clg{D}'(Q_T)$, the initial-boundary value data given respectively by \eqref{11}--\eqref{12} and \eqref{BC1}--\eqref{BC2} and finally the jump conditions
\eqref{15}--\eqref{16}. To solve this initial-boundary value diffraction problem we make use of Faedo-Galerkin's method.

\medskip
2. Now, we consider in ${W}^1_2(\Omega)$ a fundamental system of functions $\psi_k(z)$, $(k=1,2,\ldots)$, satisfying for each $k\geq 1$
\begin{equation}
\label{GAC}
   \psi_k(0)= \psi_k(1), \quad  \psi_{kz}(0)= \psi_{kz}(1).
\end{equation}
Moreover, we assume that $\{\psi_k\}_{k=1}^\infty$ is an orthonormal basis of $L_2(\Omega)$ and an orthogonal basis of $W^1_2(\Omega)$. Fixed a positive integer $N$, we will look for an approximating solution in the form
\begin{equation}\label{21}
    \hh^N(t):= \sum_{k=1}^N \bfa_k^N(t) \, \psi_k, \quad u^N(t):= \sum_{k=1}^Nb^N_k(t) \, \psi_k,
\end{equation}
where for $0 \leq t \leq T$. The coefficients $\bfa^N_k\equiv(a^N_{1k},a^N_{2k})$,
$$
  a^N_{lk}(t)= (h^N_l(t),\psi_k), \quad  b^N_k(t)= (u^N(t),\psi_k)\,,l=1,2\,,k=1, \ldots, N\,,
$$
are determined from the equations
\begin{align}
  \big(h^N_l,\psi_k \big)_t & = \big( -r h^N_{lz} +  h^N_l\, u_t^N + rj_l,\psi_{kz} \big)\,,\nonumber\\[-1.5ex]
  \label{22}\\[-1.5ex]
  (h^N_l(0),\psi_k) &= h_{l0k},\,l=1,2\,,k=1, \ldots, N\,,\nonumber
\end{align}
\begin{align}
 ( u^N,\psi_k)_{tt} &= (-\nu^2 u_z^N + p\hh^2,\psi_{kz}) + (f,\psi_k),\nonumber\\[-1.5ex]
  \label{23}\\[-1.5ex]
  (u^N(0),\psi_k) &= u_{0k},\quad (u^N(0),\psi_k)_t = u_{1k}\,,k=1, \ldots, N\,.\nonumber
\end{align}
Moreover, the values $\hh_{0k}\equiv(h_{10k},h_{20k}),u_{0k},u_{1k}$ are the Fourier coefficients in $L_2(\Omega)$ of initial data with respect to the system of the functions $\psi_k$.

\medskip
3. The equations (\ref{22}) and (\ref{23}) form a system of nonlinear ordinary differential equations, and from standard  existence theory, there exists a unique local solution on a maximal interval $[0,\tau)$, for some $\tau>0$. In fact, we will prove that $|\bfa^N_{k}|,|b^N_k|, k=1,\dots,N,$ are bounded functions for $t\in [0,T]$. Therefore, the system (\ref{22})--(\ref{23}) has a unique solution on $[0,T]$ for any
$T>0$. Indeed, we multiply the differential equations in \eqref{22} and \eqref{23} respectively by $p \, a^N_{lk}$ and $b^N_{kt}$. Now, we
sum the obtained equalities over all $k$ from 1 to $N$ and integrate the result with respect to $t$ from 0 to $t_1$. Therefore, we obtain
\begin{multline}\label{24}
    \frac{1}{2}\big(p\|\bfh^N(t)\|^2_{2,\Omega} + \|u^N_t(t)\|^2_{2,\Omega} + \|\nu u^N_z(t)\|^2_{2,\Omega} \big) \Big|^{t=t_1}_{t=0}
   + p \|\sqrt{r}\bfh^N_{z}\|^2_{2,Q_{t_1}}\\
    = p \iinte_{Q_{t_1}}r \bfj \cdot \bfh^N_{z} \, dz dt + \iinte_{Q_{t_1}}fu^N_t \, dz dt,
\end{multline}
where $Q_{t_1}= (0,t_1) \times \Omega$. One observes that
$$
\begin{aligned}
   &\|\bfh^N(0)\|^2_{2,\Omega} = \sum^N_{k=1}\bfa^2_{k}(0)\leq\|\bfh_{0}\|^2_{2,\Omega},
   \\
   &\|u_t^N(0)\|^2_{2,\Omega} = \sum^N_{k=1}b^2_{kt}(0)\leq\|u_1\|^2_{2,\Omega},
   \\
   &\|\nu \, u_z^N(0)\|^2_{2,\Omega}\leq \mu_0 \, \nu_1^2\|u_{0z}\|^2_{2,\Omega},
   \\[5pt]
   &- \frac{1}{2}\iinte_{Q_{t_1}} r \big((\bfh^N_{z})^2-2\bfh^N_{z}\cdot\bfj+\bfj^2\big) \, dz dt \leq 0,
\end{aligned}
$$
where the positive constant $\mu_0$ does not depend on $N$. Consequently, it follows that
\begin{multline*}
   \frac{p}{2}\|\bfh^N(t_1)\|^2_{2,\Omega}  + \frac{1}{2}\|u^N_t(t_1)\|^2_{2,\Omega}
   + \frac{1}{2}\|\nu u^N_z(t_1)\|^2_{2,\Omega} + \frac{p}{2} \|\sqrt{r} \, \hh_{z}^N\|^2_{2,Q_{t_1}}\\
   \leq \mu_1 + \iinte_{Q_{t_1}}|fu^N_t| \, dz d t,
\end{multline*}
where the positive constant
$$
   \mu_1 = \frac{p}{2}\|\bfh_{0}\|^2_{2,\Omega}
   + \frac{1}{2}\|u_1\|^2_{2,\Omega} + \frac{\mu_0\nu_1^2}{2}\|u_{0z}\|^2_{2,\Omega}
    + \frac{p}{2} \iinte_{Q_{t_1}}r\bfj^2 \, dz dt
$$
does not depend on $N$. In particular, we have
\begin{equation}\label{25}
   \frac{1}{2}\|u_t^N(t_1)\|^2_{2,\Omega}\leq \mu_1 + |\iinte_{Q_{t_1}}fu^N_t \, dz dt|
\end{equation}
and integrating this inequality with respect to $t_1$ from $0$ to $T$, we obtain
\begin{equation}\label{26}
   \frac{1}{2}\|u_t^N\|^2_{2,Q_T}\leq \mu_1 T + \int_0^T|\iinte_{Q_{t_1}} f u^N_t \, dz dt| \, dt_1\,.
\end{equation}
Applying the generalized Young's inequality, we have
\begin{equation}\label{27}
   |\iinte_{Q_{t_1}}fu^N_t \, dz dt|\leq\frac{\epsilon}{2}\iinte_{Q_T}(u^N_t)^2 \, d z dt
   + \frac{1}{2\epsilon} \iinte_{Q_T} f^2 \, dz dt
\end{equation}
and setting $\epsilon=1/2T$, we obtain
\begin{equation}\label{28}
   \int_0^T| \iinte_{Q_{t_1}} fu^N_t \, dz dt| \, dt_1 \leq\frac{1}{4} \iinte_{Q_T}(u^N_t)^2 \, dz dt + T^2 \iinte_{Q_T}f^2 \, dz dt\,.
\end{equation}
Therefore, inequalities (\ref{26}) and (\ref{28}) yield
$$
   \frac{1}{4}\|u^N_t\|^2_{2,Q_T}\leq\mu_1T + T^2 \iinte_{Q_T}f^2 \, dz dt \,.
$$
From the latter inequality  and \eqref{27} with $\epsilon=1/2T$,
it follows that
$$
\begin{aligned}
|\iinte_{Q_{t_1}} fu^N_t \, dz dt | &\leq \frac{1}{4T}\iinte_{Q_T}(u^N_t)^2 \, dz d t + T \iinte_{Q_T} f^2 \, dz dt \\
&\leq\mu_1 + 2T \iinte_{Q_T} f^2 \, dz dt\,.
\end{aligned}
$$
Consequently, from (\ref{24}) we have for any $t_1 \in(0,T]$
\begin{multline}\label{29}
   p \, \|\bfh^N(t_1)\|^2_{2,\Omega} + \frac{1}{2}\|u^N_t(t_1)\|^2_{2,\Omega}
   + \frac{1}{2}\|\nu u^N_z(t_1)\|^2_{2,\Omega} + p \, \|\sqrt{r}\bfh_{z}^N\|^2_{2,Q_{t_1}}
   \\[5pt]
   \leq 2 \mu_1 + 2T \iinte_{Q_T} f^2 \, dz dt =:\mu_2,
\end{multline}
where the positive constant $\mu_2$ does not depend on $N$. It follows from (\ref{29}) that $\bfa^N_{k},b^N_k,b^N_{kt}$ are uniformly bounded functions for each $t \in [0,t_1]$, $t_1\leq T$.

\begin{remark} \label{RET}
One observes that, for each $t \in [0,t_1]$, $t_1\leq T$
$$
  \int_\Omega \big( \bfh^N(t,z) - \bar{\bfh}^N(t)  \Big) \, dz= 0,
$$
where $\bar{\bfh}^N$ is the average of $\bfh^N$ on $\Omega$, that is
$$
  \bar{\bfh}^N(t):= \medint_{\!\!\!\Omega} \bfh^N(t,z) \, dz \leq \|\bfh^N(t)\|_{2,\Omega}
  \leq \Big(\frac{\mu_2}{p}\Big)^{1/2}.
$$
\end{remark}

\bigskip
4. Now, let us show that for an arbitrary fixed $k \leq N$, the $\bfa^N_{k},b^N_k,b^N_{kt}$ are also equicontinuous on $[0,T]$. For any $\delta>0$, we set $Q_{t,t+\delta}= (t,t+\delta) \times \Omega$. Then, we have from (\ref{22})
$$
  \begin{aligned}
   |\bfa^N_{k}(t + \delta) - \bfa^N_{k}(t)| &= \left|\iinte_{Q_{t,t+\delta}}(-r\bfh^N_{z} + u^N_t\bfh^N + r \bfj)\, \psi_{kz} \, dz dt \right|
   \\[5pt]
   &\leq \iinte_{Q_{t,t+\delta}} |r \, \bfh^N_{z} \, \psi_{kz}| \, dzdt + \iinte_{Q_{t,t+\delta}} |u^N_t \, \bfh^N \, \psi_{kz}| \, dzdt \\[5pt]
   &+ \iinte_{Q_{t,t+\delta}} |r \, \bfj \, \psi_{kz}| \,
   dzdt =: I_1 + I_2 + I_3,
  \end{aligned}
$$
with obvious notation. For convenience, we recall generalized Hölder's inequality
\begin{equation}
   \label{30}
   \iinte_{Q_T}|v_1 \, v_2 \, v_3| \, dz dt\leq \prod_{j=1}^3 \; \|v_j\|_{q_j,\gamma_j,Q_T},
\end{equation}
where
$$
q_i,\gamma_i\in[1,\infty),\,i=1,2,3,\quad \frac{1}{q_1}+\frac{1}{q_2}+\frac{1}{q_3}=1,\quad\frac{1}{\gamma_1}+\frac{1}{\gamma_2} +\frac{1}{\gamma_3}=1\,.
$$
Then, applying the above inequality, we could write
$$
\begin{aligned}
   I_1 &\leq r_1 \|\bfh^N_z\|_{2,Q_{t,t+\delta}} \, \|\psi_{kz}\|_{2,Q_{t,t+\delta}}
\\[5pt]
   &\leq  r_1 \, \Big(\frac{\mu_2}{p \, r_0}\Big)^{1/2} \, \delta^{1/2} \; \|\psi_{kz}\|_{2,\Omega},
\\[5pt]
   I_2 &\leq \|u^N_t\|_{2,4,Q_{t,t+\delta}}\; \|\bfh^N\|_{\infty,4,Q_{t,t+\delta}} \;\|\psi_{kz}\|_{2,Q_{t,t+\delta}}
   \\[5pt]
   &\leq \mu_2 \; \delta^{1/4} \;\|\bfh^N\|_{\infty,4,Q_{t,t+\delta}} \, \delta^{1/2} \;\|\psi_{kz}\|_{2,\Omega}
   \\[5pt]
   &\leq \mu_2 \; \delta^{3/4} \; \|\bfh^N\|_{\infty,4,Q_{t,t+\delta}}\,\|\psi_{kz}\|_{2,\Omega}
   \\[5pt]
   I_3 &\leq r_1 \| \jj \|_{2,Q_{t,t+\delta}} \,\|\psi_{kz}\|_{2,Q_{t,t+\delta}}
   \\[5pt]
   &\leq r_1 \, \mu_2 \, \delta^{1/2}\|\psi_{kz}\|_{2,\Omega}.
\end{aligned}
$$
%
In order to estimate $\|\bfh^N\|_{\infty,4,Q_{t,t+\delta}}$, first we recall Remark \ref{RET} and define
$$
  \bfv^N:= \bfh^N - \bar{\bfh}^N.
$$
Then, we apply Theorem 2.2. in \cite[pp.62-63]{LSU}, with Remark 2.1 in \cite[pp.63]{LSU} for each component of the vector-function $\bfv^N$. Therefore, we have
$$
\|v\|_{q,\Omega}\leq\beta\|v_z\|^{\frac{2}{\gamma}}_{2,\Omega}\cdot\|v\|^{1-\frac{2}{\gamma}}_{2,\Omega},
$$
where
$$
 q\in[2,+\infty],\quad\gamma\in[4,+\infty],\quad\frac{1}{\gamma}+\frac{1}{2q}=\frac{1}{4}\,,
$$
the positive constant $\beta$ depends on $\gamma$ and $\Omega$, and $v$ stands for the components of $\bfv^N$. By integration with respect to the time variable from $t$ to $t +\delta$, we obtain
$$
  \|v\|_{q,\gamma,Q_{t,t+\delta}}\leq\beta\|v_z\|^{\frac{2}{\gamma}}_{2,Q_{t,t+\delta}}\;
  \mbox{vrai} \max_{\tau\in[t,t+\delta]}\|v\|^{1-\frac{2}{\gamma}}_{2,\Omega}.
$$
Moreover, using Young's inequality, we could rewrite the latter inequality as
\begin{equation}
\label{33}
   \|v\|_{q,\gamma,Q_{t,t+\delta}}\leq\beta \frac{2}{\gamma}\|v_z\|_{2,Q_{t,t+\delta}}
   + \beta(1-\frac{2}{\gamma})\; \mbox{vrai}\max_{\tau\in[t,t+\delta]}\|v\|_{2,\Omega}\,.
\end{equation}
Consequently, from equation \eqref{33}, it follows that
$$
  \begin{aligned}
  I_2 &\leq \mu_2 \, \delta^{3/4} \, \beta\, \Big( \|\bfh^N_z\|^{1/2}_{2,Q{t,t+\delta}} +
  \mbox{vrai}\max_{\tau\in[t,t+\delta]}\|\bfh^N(\tau)\|^{1/2}_{2,\Omega}\Big) \, \|\psi_{kz}\|_{2,\Omega}\\
   &+ \mu_2 \, \delta^{3/4} \, \beta \, \Big( \|\bar{\bfh}^N_z\|^{1/2}_{2,Q{t,t+\delta}}+
  \mbox{vrai}\max_{\tau\in[t,t+\delta]}\|\bar{\bfh}^N(\tau)\|^{1/2}_{2,\Omega}\Big) \, \|\psi_{kz}\|_{2,\Omega}\\
  &\leq 2 \, \mu_2 \, \delta^{3/4} \, \beta\, \big( \Big( \frac{\mu_2}{p \, r_0} \Big)^{1/4} + \Big( \frac{\mu_2}{p} \Big)^{1/4}
  \, \big)\|\psi_{kz}\|_{2,\Omega}.
  \end{aligned}
$$

\bigskip
Therefore, given $\epsilon>0$, there exists a $\delta>0$ such that, if $0 < t \leq \tau < t+\delta$, then
$$
|\bfa^N_{k}(\tau)-\bfa^N_{k}(t)|\leq \epsilon \;\|\psi_{kz}\|_{2,\Omega},
$$
where $\epsilon$ does not depend on $N$ and tends to zero as $\delta \to 0^+$, i.e. the uniformly equicontinuity in $t$ of the $\bfa^N_{k}$. The equicontinuity of functions $b^N_k$ follows from the boundedness of their derivatives. Moreover, it is proved similarly as done for the functions $\bfa^N_{k}$, $N\geq k$, that $b^N_{kt}$ are equicontinuous functions on $t$ for $N\geq k$.

\bigskip
5. By usual Cantor's diagonal process, we can select subsequences
$$
  \{\bfa_{k}^{N_m}\}_{m=1}^\infty \subset \{\bfa_{k}^{N}\}_{N=1}^\infty \quad \text{and} \quad
  \{b_k^{N_m}\}_{m=1}^\infty \subset \{b_k^{N}\}_{N=1}^\infty
$$
be converging uniformly on $[0,T]$ to some continuous functions $\bfa_{k}(t),b_k(t)$. Hence for each $t \in [0,T]$, we define
\begin{equation}
\label{LF}
   \hh(t) := \sum_{k=1}^{\infty}\bfa_{k}(t) \, \psi_k,\qquad u(t) := \sum_{k=1}^{\infty}b_k(t) \, \psi_k\,.
\end{equation}
The sequence $\{\bfh^{N_m}\}$ converges to the function $\bfh$ weakly in $L_2(\Omega)$ and uniformly with respect to $t$ in $[0,T]$. Indeed, for any $\psi \in L_2(\Omega)$, taking $\ppsi= \psi \bfe$, with $\bfe=(1,1)$, it follows that
\begin{multline}\label{34}
   \big(\hh^{N_m}-\hh,\ppsi \big)\\
   =\sum_{k=1}^s(\psi,\psi_k)(\hh^{N_m}-\hh,\psi_k\bfe) +(\sum_{k=s+1}^{\infty}(\hh^{N_m}-\hh,(\psi,\psi_k)\psi_k\bfe),
\end{multline}
with
$$
|(\bfh^{N_m}-\bfh,\sum_{k=s+1}^{\infty}(\psi,\psi_k)\psi_k\bfe)|\leq C_1(\sum_{k=s+1}^{\infty}(\psi,\psi_k)^2)^{1/2}\equiv C_1R(s),
$$
where the positive constant $C_1$ does not depend on $N_m$ and $s$. Now, we choose $s$ large enough so that, $C_1R(s)$ becomes less than a preassigned $2\epsilon>0$. On the other hand, for fixed $s$ and large enough $N_m$, the first sum in (\ref{34}) will be less than $\epsilon$ for all $t$ in $[0,T]$. Therefore, the term
$$
  |\big( \hh^{N_m}-\hh,\ppsi \big)|
$$
can be made less than $\epsilon$ for all $t$ in $[0,T]$. Consequently, the sequence $\{\bfh^{N_m}\}$ converge to $\bfh$ weakly in $L_2(\Omega)$, uniformly with respect to $t\in[0,T]$. Moreover, by construction and uniqueness of the limit, we obtain for each $t \in [0,T]$
\begin{equation}
\label{BCH}
   \hh(t,0)= \hh(t,1) \qquad \hh_z(t,0)= \hh_z(t,1).
\end{equation}
The sequence $\{u^{N_m}\}$ is bounded in $L_{\infty}(0,T;W^1_2(\Omega))$ and the sequence $\{u^{N_m}_t\}$ is bounded in $L_{\infty}(0,T;L_2(\Omega))$. Hence
$$
\text{$u^{N_m}$ converges to $u$ $*$-weakly in $L_{\infty}(0,T;W^1_2(\Omega))$},
$$
$$
 \text{$u^{N_m}_t$ converges to $u_t$ $*$-weakly in $L_{\infty}(0,T;L_2(\Omega))$}.
$$
Moreover, the sequence $\{u^{N_m}\}$ is bounded in $W^{1,1}_2(Q_T)$. Then, from the Sobolev's Imbedding Theorem the sequence $\{u^{N_m}\}$ converges strongly to $u$ in $L_2(Q_T)$ and, passing to an appropriate subsequence if necessary, we obtain a.e. convergence in $Q_T$. Finally, we observe that for a.e. $t\in [0,T]$
\begin{equation}
\label{BCU}
   u(t,0)= u(t,1) \qquad u_z(t,0)= u_z(t,1).
\end{equation}

\bigskip
6. At this point, we proceed to extend by periodicity the considered functions,  where conditions \eqref{BCH} and \eqref{BCU} will be used. For convenience, we remain the same notation. Therefore, we have
\begin{equation}
\text{$\bfh^{N_m}(t)$ converges to the function $\bfh(t)$ weakly in $L_2(\T)$},
\end{equation}
uniformly with respect to $t$ in $[0,T]$ and
\begin{equation}
  \text{$u^{N_m}$ converges to $u$ $*$-weakly in $L_{\infty}(0,T;W^1_2(\T))$},
\end{equation}
\begin{equation}
  \text{$u^{N_m}_t$ converges to $u_t$ $*$-weakly in $L_{\infty}(0,T;L_2(\T))$}.
\end{equation}
Now, let us show that the sequence $\{u^{N}_{tt}\}$ is bounded in $L_2(0,T;H^{-1}(\T))$, where $H^{-1}(\T)$ is the dual space of
$H^1(\T)\equiv W^{1}_2(\T)$, see Iório--Iório \cite{IO}. For this purpose, we fix any function $\Psi \in W^{1}_2(\T)$, such that
$\|\Psi\|_{W^{1}_2(\T)} \leq 1$, $\Psi=\Psi_1 + \Psi_2$, where
$$
  \text{$\Psi_1\in \mbox{span}\{\psi_k\}_{k=1}^{k=N}$, and $(\Psi_2,\psi_k)=0, k=1,\dots,N$}.
$$
We denote by  $\langle a,b \rangle$ the pairing between $a \in H^{-1}(\T)$ and $b \in W^{1}_2(\T)$.  From (\ref{23}), we have
$$
\langle u_{tt}^N, \Psi\rangle = (u_{tt}^N, \Psi_1) = -(\nu^2u^N_{z},\Psi_{1z}) + p((\hh^N)^2,\Psi_{1z}) + (f,\Psi_{1})\,.
$$
It is easy to check that $\|\Psi_1\|_{W^{1}_2(\T)}\leq1$. In view of this and estimates obtained before, we obtain
$$
|\langle u^N_{tt},\Psi\rangle|\leq C_2\mu_2,
$$
where the positive constant $C_2$ does not depend on $N$. It proves that $\{u^N_{tt}\}$ is bounded in $L_2(0,T;H^{-1}(\T))$. Then, it follows that
\begin{equation}
  u^N_{tt}\rightharpoonup u_{tt}\;\mbox{weakly in}\;L_2(0,T;H^{-1}(\T))\,.
\end{equation}
Analogously, let us show that the sequence $\{\bfh^N_{t}\}$ is bounded in $L_2(0,T;H^{-1}(\T))$. First from (\ref{29}), one can extract from the sequence $\{\bfh^{N_m}\}$ a subsequence converging to $\bfh$ weakly in $L_2(\Pi_T)$ together with $\{\bfh^{N_m}_{z}\}$. Now consider a function $\Phi\in W^{1}_2(\T)$, such that $\|\Phi\|_{W^{1}_2(\T)} \leq 1$, $\Phi=\Phi_1 + \Phi_2$, where $\Phi_1\in \mbox{span}\{\psi_k\}_{k=1}^{k=N}$, and $(\Phi_2,\psi_k)=0, k=1,\dots,N$. Since we have
$$
h_{lt}^N=\sum_{k=1}^Na^N_{lkt}\psi_k\in\mbox{span}\{\psi_k\}_{k=1}^{k=N},\quad l=1,2,
$$
hence the following equalities are valid
$$
\langle h^N_{lt},\Phi\rangle = \langle h^N_{lt},\Phi_1\rangle = -(rh^N_{lz}-u^N_th^N_l-rj_l,\Phi_{1z}),\,l=1,2.
$$
Using $\|\Phi_1\|_{W^1_2(\Omega)}\leq1$ and \eqref{29}--\eqref{33}, we obtain
$$
  |\langle h^N_{lt},\Phi\rangle|\leq C_3 \mu_2, \quad l=1,2,
$$
where the positive constant $C_3$ does not depend on $N$. The latter provides that $\{\bfh^N_t\}$ is bounded in $L_2(0,T;H^{-1}(\T))$. Also from \eqref{29}, it follows that $\{\bfh^N\}$ is bounded in $L_2(0,T;W^1_2(\T))$. The well-known Aubin-Lions Theorem, \cite[p.58]{Li}, implies that, a subsequence of $\{\bfh^N\}$ converges strongly in $L_2(\Pi_T)$. Therefore, passing a subsequence if necessary, we have a.e. convergence in
$\Pi_T$. Without loss of generality, we can assume that the sequences $\{\bfh^{N_m}\},\{u^{N_m}\}$ converge to the limit functions
$\bfh,u$ in the sense mentioned above. Consequently, the sequence $\{\bfh^N\}$ converges a.e. in $\Pi_T$.

\bigskip
7. Finally, it remains to show that the pair $(\hh, u)$ of limit functions satisfy equations \eqref{17} and \eqref{18}. First, we will show that the function $\hh$ satisfies equality \eqref{17}. For this purpose we multiply each equation of (\ref{22}) by a smooth function $\alpha_k(t)$ that is equal to zero for $t=T$, then sum over all $k$ from 1 to $N'\leq N$, and integrate the result with respect to $t$ from $0$ to $T$. Therefore, we obtain
\begin{multline}
\label{35}
   \int_0^T(h^N_l,\Upsilon^{N'}_t) \, dt = \int_0^T\big((rh^N_{lz},\Upsilon^{N'}_z) - (u^N_th^N_l,\Upsilon^{N'}_z)-(rj_l,\Upsilon^{N'}_z)\big) \, dt\\
   + (h^{N}_{l0},\Upsilon^{N'}_z(0)),\,l=1,2,
\end{multline}
where $\Upsilon^{N'}(t,z)=\sum_{k=1}^{N'}\alpha_k(t)\psi_k(z)$ belongs to $L_{\infty}(0,T;H^1_0(\T))\hookrightarrow L_{\infty}(\Pi_T)$. We claim that we can pass to the limit in equation (\ref{35}) along the subsequences selected above, assuming $\Upsilon^{N'}$ fixed, and thereby arrive at (\ref{35}) with $h_l^{N_m}, u^{N_m}$ being replaced respectively by $h_l,u$, i.e. we must prove that
$$
   \int_0^T (u_t^{N_m}h_l^{N_m}-u_th_l,\Upsilon^{N'}_z) \, dt \rightarrow 0,\quad\mbox{as}
   \quad m\rightarrow\infty,\quad  \mbox{for} \quad l=1,2.
$$
Indeed, by a simple algebraic manipulation, we have
$$
  \begin{aligned}
   \int_0^T (u_t^{N_m}h_l^{N_m}-u_th_l,\Upsilon^{N'}_z) \, dt &= \int_0^T(u_t^{N_m}(h_l^{N_m}-h_l),\Upsilon^{N'}_z)\,dt
   \\
   &+\int_0^T(u_t^{N_m}-u_t,h_l\Upsilon^{N'}_z) \, dt \,.
  \end{aligned}
$$
But the first term in the right hand side of the latter equality tends to zero as $m\rightarrow\infty$ according to the estimates
$$
\begin{aligned}
   \int_0^T|u_t^{N_m}(t)&(h_l^{N_m}-h_l)(t),\Upsilon_z^{N'}(t)| \,
   dt \\
   &\leq \|\Upsilon_z^{N'}\|_{L_{\infty}(\Pi_T)}\int_0^T\|u_t^{N_m}(t)\|_{L_2(\T)}\|(h_l^{N_m}-h_l)(t)\|_{L_2(\T)} \, dt \\
   &\leq 2 \mu_2 T \|\Upsilon_z^{N'}\|_{L_{\infty}(\Pi_T)}\|(h_l^{N_m}-h_l)\|_{L_{\infty}(0,T;L_2(\T))}\rightarrow 0,
\end{aligned}
$$
as $m \to \infty$. Moreover, one observes that the last term in \eqref{35} tends to 0, since we have proved that $u_t^{N_m}$
converges to $u_t$ $\ast$-weakly in $L_{\infty}(0,T;L_2(\T))$ and $h_l\Upsilon_z^{N'}\in L_2(0,T;L_2(\T))\hookrightarrow
L_1(0,T;L_2(\T))$ due to the estimates
$$
   \|h_l\Upsilon_z^{N'}\|_{L_2(0,T;L_2(\T))}\leq\|h_l\|_{L_2(0,T;L_{\infty}(\T))}\|\Upsilon_z^{N'}\|_{L_{\infty}(0,T;L_2(\T))},\quad l=1,2.
$$
But it is not difficult to show that $\Upsilon^{N'}$ are dense in the space of all required functions in the first definition of a weak solution. In view of this $h_l, l=1,2,$ satisfy equalities \eqref{17} and belong to be the space $V_2(\Pi_T)$. Moreover, since
$$
  \|u_t^N\|_{2,\Omega}^2 \leq 2 \, \mu_2,
$$
passing to the limit, we obtain
$$
  \|u_t\|_{2,\Omega}^2 \leq 2 \, \mu_2.
$$
Therefore, we have
$$
   u^2_t\in L_{1,\frac{2}{1-2\kappa}}(\Pi_T),\quad\kappa\in(0,\frac{1}{2}),
$$
and we can deduce from Theorem 8.1 \cite[p.192]{LSU}, interpreting $Q= [0,T] \times [-1,2]$, $Q'= [0,T] \times [0,1]$ and by the periodicity procedure, that
$$
  \mbox{vrai}\max_{\Pi_T}|\bfh|\leq C_4,
$$
for some positive constant $C_4$. Similarly, from Theorem 10.1 of \cite[p.204]{LSU}, it follows that
$$
  \bfh\in C^{\alpha_1,\,\alpha_1/2}(\Pi_T),
$$
where $0 < \alpha_1 \leq \alpha$. Now, let us show that $u$ satisfies equality (\ref{18}). For this purpose, first we multiply each equation of (\ref{23}) by the $\alpha_k(t)$, then sum over all $k$ from 1 to $N'\leq N$, and integrate the result with respect to $t$ from $0$ to $T$. Hence after an integration by parts, we have
\begin{multline}
    \label{36}
    \int_0^T(u^N_t,\Upsilon^{N'}_t) \, dt=\int_0^T\big((\nu^2u^N_z,\Upsilon^{N'}_z)- p((\hh^N)^2,\Upsilon^{N'}_z) + (f,\Upsilon^{N'})\big) \, dt
    \\
    - (u^N_{1},\Upsilon^{N'}(\cdot,0)),\;u^N(z,0)=u^N_0(z),
\end{multline}
where $\Upsilon^{N'}$ were defined in (\ref{35}). Then, passing to the limit in \eqref{36} with respect to the subsequence $\{N_m\}$
selected above, assuming that $\Upsilon^{N'}$ is fixed, we arrive at (\ref{36}) with $\hh^N,u^N$ being replaced by $\bfh,u$. Since
$\{\bfh^N\}$ converges a.e. in $\Pi_T$ to $\bfh$, we can pass to limit in the non-linear term $\{(\bfh^N)^2\}$, which converges to
$(\bfh)^2$ weakly in (\ref{36}). Moreover, since $\max_{\Pi_T}|\bfh|\leq C$, we have that
$$
  \Big\{\int_0^T (\bfh^2,\Upsilon^{N'}_z) \, dt \Big\}
$$
is bounded for any $\Upsilon^{N'}\in W^{1,1}_2(\Pi_T)$. Therefore, as the functions $\Upsilon^{N'}$ are dense in the space considered
in the definition of weak solution, we conclude that the function $u$ satisfies equality (\ref{18}) and is a weak solution from
$W^{1,1}_2(\Pi_T)$. Consequently, Theorem \ref{TIVP} is proved.
\end{proof}

\medskip
Membership of such a solution $\bfh$ in $V^{1,1/2}_2(\Pi_T)$ follows from Lemma 4.1 of \cite[p.158]{LSU} and Theorem
\ref{TIVP}. Then, we have the following

\begin{corollary}\label{cor1}
Any weak solution $\bfh$ of problem \eqref{9}--\eqref{16} from $V_2~(\Pi_T)$ belongs to $V_2^{1,1/2}(\Pi_T)$.
\end{corollary}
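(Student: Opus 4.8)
The plan is to read the integral identity \eqref{17} as the weak formulation of a \emph{linear} parabolic equation for the vector unknown $\bfh$,
$$
  \bfh_t - \partial_z\!\big(r\,\bfh_z\big) = \partial_z g, \qquad g := -\,\bfh\,u_t - r\,\jj ,
$$
whose leading coefficient $r$ is, by hypothesis (b), bounded and uniformly positive, $r_0\le r\le r_1$, and whose right-hand side is in divergence form. Once this reading is fixed, the membership $\bfh\in V_2^{1,1/2}(\Pi_T)$ is precisely the conclusion of Lemma~4.1 in \cite[p.~158]{LSU}: that lemma asserts that a $V_2^{1,0}$-solution of such an equation obeys the time-translation estimate
$$
  \int_0^{T-\tau}\!\!\int_\T \tau^{-1}\big(\bfh(z,t+\tau)-\bfh(z,t)\big)^2\,dz\,dt \;\longrightarrow\; 0 \quad (\tau\to 0^+),
$$
the bound for the left-hand side being expressed through tails of $\iint |\bfh_z|^2$ and of $g^2$, which vanish by absolute continuity of the integral. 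So the whole task reduces to verifying the integrability conditions of that lemma for our particular $g$ and $r$; note in particular that the H\"older continuity in $t$ with exponent $\alpha_1/2<1/2$ furnished by Theorem \ref{TIVP} does not by itself yield the required vanishing limit.

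Next I would check the hypotheses. The solution lies in $V_2^{1,0}(\Pi_T)$ because it was obtained in Theorem \ref{TIVP} as a limit of the Galerkin functions $\bfh^{N_m}\in W^{1,1}_2$ with the periodic matching \eqref{BCH}; equivalently, $\bfh\in V_2(\Pi_T)$ with the correct lateral traces. For the free term, the a priori bounds established within the proof of Theorem \ref{TIVP} — which rely only on the energy identity \eqref{24} and the maximum-principle estimate of Theorem~8.1 in \cite[p.~192]{LSU}, hence apply to any weak solution in $V_2(\Pi_T)$ — give $\mbox{vrai}\max_{\Pi_T}|\bfh|\le C_4$ and $\|u_t\|_{L_\infty(0,T;L_2(\T))}\le C$. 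Since $\jj$ and $r$ are bounded by hypothesis (a), it follows that
$$
  g = -\,\bfh\,u_t - r\,\jj \;\in\; L_{2,\infty}(\Pi_T),
$$
which meets the structural requirement of Lemma~4.1 in one space dimension, where $\tfrac{1}{\gamma}+\tfrac{n}{2q}=\tfrac{1}{2}<1$ for $q=2,\ \gamma=\infty$; together with $r\in L_\infty$, $r\ge r_0>0$, this makes the lemma applicable.

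Finally I would handle the periodic/diffraction bookkeeping, since \cite{LSU} is stated on a bounded cylinder. As elsewhere in this paper, extend $\bfh$, $u$, $r$, $\jj$ periodically and apply the interior form of Lemma~4.1 on an enlarged cylinder, e.g. $Q=(0,T)\times(-1,2)$ and $Q'=(0,T)\times(0,1)$, so that no lateral boundary influences $Q'$; the transmission conditions \eqref{15}--\eqref{16} are already encoded in the weak identity \eqref{17}, so the diffraction version of the lemma applies across the surfaces $z=z_k+\kappa$ without change. Restricting the resulting estimate to $\Pi_T$ gives $\bfh\in V_2^{1,1/2}(\Pi_T)$.

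The only genuinely delicate point is the term $\bfh\,u_t$: the hyperbolic velocity $u_t$ is controlled only in $L_\infty(0,T;L_2(\T))$, with neither spatial smoothness nor better time integrability, so a careless estimate would fail to place $\partial_z(\bfh u_t)$ in an admissible class. What rescues the argument is exactly the bound $|\bfh|\le C_4$ from Theorem \ref{TIVP}, which keeps $g=-\bfh u_t-r\jj$ in $L_{2,\infty}(\Pi_T)$ without ever differentiating $u$ — precisely the borderline class that Lemma~4.1 covers when $n=1$. Ellipticity, the $V_2^{1,0}$ membership, and the periodic extension are then routine.
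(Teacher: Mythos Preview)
Your proposal is correct and follows essentially the same approach as the paper: the paper's justification of the corollary is the single sentence ``Membership of such a solution $\bfh$ in $V^{1,1/2}_2(\Pi_T)$ follows from Lemma~4.1 of \cite[p.~158]{LSU} and Theorem~\ref{TIVP}'', and you have simply unpacked what that invocation entails---reading \eqref{17} as a linear parabolic equation for $\bfh$ with divergence-form source $g=-\bfh u_t-r\jj$, verifying via the $L_\infty$ bound on $\bfh$ and the $L_\infty(0,T;L_2)$ control of $u_t$ that $g$ lies in the admissible class for Lemma~4.1, and handling periodicity by the same extension trick used elsewhere in the paper. Your observation that the a~priori bounds needed here hold for \emph{any} weak solution (not only the Galerkin limit) is a useful clarification, since the corollary is stated for arbitrary weak solutions while the preceding sentence in the paper refers only to the constructed one.
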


Moreover, we have also that
\begin{corollary}\label{cor2}
For any function $\phi \in L_2(0,T;W^1_2(\T))$ is valid the following equality
$$
   \int_0^T \langle u_{tt},\phi\rangle \, dt = \int_0^T \big((f-p (\bfh^2)_z,\phi)-(\nu^2u_z,\phi_z)\big) \, dt\,.
$$
\end{corollary}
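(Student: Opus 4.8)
The plan is to read the asserted equality off the weak formulation \eqref{18} already established in Theorem \ref{TIVP}, together with the fact, also obtained in the course of that proof, that $u_{tt}\in L_2(0,T;H^{-1}(\T))$ (it is the weak limit of $\{u^N_{tt}\}$, hence the distributional second time-derivative of $u$). First I would restrict the admissible test functions in \eqref{18} to the subclass
$$
  \mathcal{W}_0:=\{\zeta\in W^{1,1}_2(\Pi_T):\ \zeta(\cdot,0)=\zeta(\cdot,T)=0\ \text{in}\ L_2(\T)\},
$$
for which \emph{both} endpoint contributions in \eqref{18} vanish, so that every $\zeta\in\mathcal{W}_0$ satisfies
$$
  -\iinte_{\Pi_T} u_t\,\zeta_t\,dzdt+\iinte_{\Pi_T}\nu^2 u_z\,\zeta_z\,dzdt+\iinte_{\Pi_T}p\,(\hh^2)_z\,\zeta\,dzdt=\iinte_{\Pi_T} f\,\zeta\,dzdt.
$$

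Next I would identify the first term. Since $u_{tt}$ is the weak time-derivative of $u_t\in L_2(\Pi_T)$ in the sense $u_{tt}\in L_2(0,T;H^{-1}(\T))$, and since every $\zeta\in\mathcal{W}_0$ vanishes at $t=0$ and $t=T$, integration by parts in $t$ carries no boundary term, i.e. $-\iinte_{\Pi_T} u_t\,\zeta_t\,dzdt=\int_0^T\langle u_{tt},\zeta\rangle\,dt$. Substituting and rearranging gives, for every $\zeta\in\mathcal{W}_0$,
$$
  \int_0^T\langle u_{tt},\zeta\rangle\,dt=\int_0^T\big((f-p(\hh^2)_z,\zeta)-(\nu^2 u_z,\zeta_z)\big)\,dt,
$$
which is the claimed identity on $\mathcal{W}_0$. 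Here one records that the right-hand side is well defined: $f\in L_2(\Pi_T)$ by hypothesis (a); $\nu^2 u_z\in L_2(\Pi_T)$ since $\nu\le\nu_1$ and $u\in W^{1,1}_2(\Pi_T)$; and $(\hh^2)_z=2\,\hh\,\hh_z\in L_2(\Pi_T)$ because $\hh_z\in L_2(\Pi_T)$ (as $\hh\in V_2(\Pi_T)$) and $\mathrm{vrai}\max_{\Pi_T}|\hh|\le C_4$ by Theorem \ref{TIVP}.

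Finally I would upgrade from $\mathcal{W}_0$ to all of $L_2(0,T;W^1_2(\T))$ by density: finite sums $\sum_k\theta_k(t)\psi_k(z)$ with $\theta_k\in C^\infty_c(0,T)$ lie in $\mathcal{W}_0$ and, using that $\{\psi_k\}$ is an orthogonal basis of $W^1_2(\Omega)$, are dense in $L_2(0,T;W^1_2(\T))$; meanwhile both sides of the identity are bounded linear functionals of $\phi$ on $L_2(0,T;W^1_2(\T))$ — the left side by $\|u_{tt}\|_{L_2(0,T;H^{-1}(\T))}\,\|\phi\|_{L_2(0,T;W^1_2(\T))}$, the right side by $C\big(\|f\|_{2,\Pi_T}+\|(\hh^2)_z\|_{2,\Pi_T}+\nu_1^2\|u_z\|_{2,\Pi_T}\big)\|\phi\|_{L_2(0,T;W^1_2(\T))}$ — so the equality extends to arbitrary $\phi$.

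I expect the only delicate point to be the clean identification $-\iinte_{\Pi_T} u_t\,\zeta_t=\int_0^T\langle u_{tt},\zeta\rangle\,dt$ for the full class $\mathcal{W}_0$ rather than merely for separated test functions: this requires knowing that $u_{tt}$, produced a priori only as a weak limit of $\{u^N_{tt}\}$, is genuinely the distributional second time-derivative of $u$, and that $u_t\in C([0,T];H^{-1}(\T))$ so that the (vanishing) endpoint traces make sense — both being standard consequences of $u_t\in L_2(\Pi_T)$ and $u_{tt}\in L_2(0,T;H^{-1}(\T))$ by a Lions--Magenes-type interpolation. An essentially equivalent alternative would bypass \eqref{18} and pass to the limit $N_m\to\infty$ directly in the Galerkin relation $\langle u^N_{tt},\Upsilon^{N'}\rangle=-(\nu^2 u^N_z,\Upsilon^{N'}_z)+p((\hh^N)^2,\Upsilon^{N'}_z)+(f,\Upsilon^{N'})$ integrated in $t$, using $u^N_{tt}\rightharpoonup u_{tt}$ weakly in $L_2(0,T;H^{-1}(\T))$, $u^N_z\rightharpoonup u_z$, and $(\hh^N)^2\to\hh^2$ strongly in $L_2(\Pi_T)$ (from a.e. convergence and the uniform $L_6(\Pi_T)$-bound furnished by \eqref{29} and \eqref{33}), followed by the same density step.
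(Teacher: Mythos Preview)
The paper states this corollary without proof, treating it as an immediate consequence of the construction in Theorem~\ref{TIVP}. Your argument is correct. In fact your \emph{alternative} route---integrating the Galerkin identity $\langle u^N_{tt},\Psi\rangle=-(\nu^2u^N_z,\Psi_z)+p((\hh^N)^2,\Psi_z)+(f,\Psi)$ in $t$ and passing to the limit using $u^N_{tt}\rightharpoonup u_{tt}$ in $L_2(0,T;H^{-1}(\T))$, $u^N_z\rightharpoonup u_z$, and $(\hh^N)^2\to\hh^2$---is exactly what the paper has already set up in step~6 of the existence proof, and is the intended one-line justification (note also that $p((\hh^N)^2,\Psi_z)=-p(((\hh^N)^2)_z,\Psi)$ on the torus, matching the form in the corollary). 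Your primary route via \eqref{18} with endpoint-vanishing test functions is equally valid but slightly more circuitous, since it requires the extra identification $-\iinte_{\Pi_T} u_t\,\zeta_t\,dzdt=\int_0^T\langle u_{tt},\zeta\rangle\,dt$ on $\mathcal W_0$; you correctly flag that this rests on $u_t\in C([0,T];H^{-1}(\T))$, which the paper in fact records immediately after the corollary.
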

The latter result allows us to conclude that the following equation holds for any function $\xi(z)\in W^1_2(\T)$ and for
almost all $t \in [0,T]$, that is
$$
   \langle u_{tt},\xi\rangle = (f-p(\bfh^2)_z,\xi)-(\nu^2u_z,\xi_z)\,.
$$
Furthermore, we notice that
\[
   u\in C([0,T];L_2(\T)),\;u_t\in C([0,T];H^{-1}(\T))\,.
\]

\bigskip
\subsection{Uniqueness of weak solution}\label{ss.3.2}

In this section we prove uniqueness of weak solution of the initial-value problem \eqref{9}--\eqref{16}. First, we need the following

\begin{lemma}\label{lem1}
Suppose that
$$
  \Big(\hh \in V_{2}(\Pi_T), \quad u \in W^{1,1}_{2}(\Pi_T) \Big)
$$
is a weak solution of problem \eqref{9}--\eqref{16}. Then, the following inequality is valid for almost all $t_1\in[0,T]$
\begin{multline}\label{37}
  \frac{1}{2} \int_{\T} \big(p\bfh^2(t_1) + u_t^2(t_1) + \nu^2 u_z^2(t_1)\big) \, dz
  + \frac{1}{2} \int_0^{t_1}\!\!\!\int_{\T} p \, r \; \bfh_{z}^2 \, dz dt \\[5pt]
  \leq \int_{\T}(p\bfh_{0}^2 + u_1^2 + \nu^2u_{0z}^2)\, dz + \int_0^{t_1}\!\!\!\int_{\T} p \, r \bfj^2 \, dz dt
  + 2 \, t_1 \int_0^{t_1}\!\!\!\int_{\T} f^2 \, dz dt.
\end{multline}
\end{lemma}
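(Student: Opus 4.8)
The plan is to obtain \eqref{37} from the basic energy balance of the system, reproduced now for an arbitrary weak solution rather than for the Galerkin approximants as in \eqref{24}. I would freeze the coupling and treat the two blocks separately: with $u_t$ regarded as a given coefficient, $\bfh$ is a weak solution of the linear parabolic system $\bfh_t=\big(r\bfh_z-r\bfj-\bfh u_t\big)_z$, $\bfh(0)=\bfh_0$; and, since $\bfh\in L_\infty(\Pi_T)$ — the bound obtained right after the proof of Theorem~\ref{TIVP} — the term $p(\bfh^2)_z=2p\,\bfh\cdot\bfh_z$ lies in $L_2(\Pi_T)$, so $u$ is a weak solution of the linear wave equation $u_{tt}=\big(\nu^2 u_z\big)_z+g$ with $g:=f-p(\bfh^2)_z\in L_2(\Pi_T)$ and data $(u_0,u_1)\in W_2(\T)\times L_2(\T)$. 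For each block the energy identity is classical: for $\bfh$, testing with $\bfh$, integrating by parts in $z$ (the $1$-periodicity killing the boundary term) and in $t$ over $(0,t_1)$ gives
$$
 \tfrac12\|\bfh(t_1)\|_{2,\T}^2+\int_0^{t_1}\!\!\!\int_\T r\,\bfh_z^2\,dz\,dt-\int_0^{t_1}\!\!\!\int_\T u_t\,\bfh\cdot\bfh_z\,dz\,dt=\tfrac12\|\bfh_0\|_{2,\T}^2+\int_0^{t_1}\!\!\!\int_\T r\,\bfj\cdot\bfh_z\,dz\,dt ,
$$
while $u$, being the unique weak solution of the linear wave equation in the energy class $C([0,T];W_2(\T))\cap C^1([0,T];L_2(\T))$, satisfies
$$
 \tfrac12\|u_t(t_1)\|_{2,\T}^2+\tfrac12\|\nu u_z(t_1)\|_{2,\T}^2=\tfrac12\|u_1\|_{2,\T}^2+\tfrac12\|\nu u_{0z}\|_{2,\T}^2+\int_0^{t_1}\!\!\!\int_\T g\,u_t\,dz\,dt ,
$$
for almost every $t_1\in[0,T]$.

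Multiplying the first identity by $2p$, using $2\,\bfh\cdot\bfh_z=\partial_z(\bfh^2)$, and adding the second (in which $\int\!\!\int g\,u_t=\int\!\!\int fu_t-p\int\!\!\int\partial_z(\bfh^2)\,u_t$), the two coupling terms $\mp\,p\int_0^{t_1}\!\!\int_\T\partial_z(\bfh^2)\,u_t\,dz\,dt$ cancel identically, and one is left with the coupled energy identity
\begin{multline*}
 p\,\|\bfh(t_1)\|_{2,\T}^2+\tfrac12\|u_t(t_1)\|_{2,\T}^2+\tfrac12\|\nu u_z(t_1)\|_{2,\T}^2+2p\!\int_0^{t_1}\!\!\!\int_\T r\,\bfh_z^2\,dz\,dt\\
 =p\,\|\bfh_0\|_{2,\T}^2+\tfrac12\|u_1\|_{2,\T}^2+\tfrac12\|\nu u_{0z}\|_{2,\T}^2+2p\!\int_0^{t_1}\!\!\!\int_\T r\,\bfj\cdot\bfh_z\,dz\,dt+\int_0^{t_1}\!\!\!\int_\T f\,u_t\,dz\,dt ,
\end{multline*}
the analogue on the torus of \eqref{24}. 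From here \eqref{37} is reached by exactly the manipulations of \eqref{25}--\eqref{29}: estimate $2p\,r\,\bfj\cdot\bfh_z\le p\,r\,\bfh_z^2+p\,r\,\bfj^2$ by Young's inequality and absorb the $p\int\!\!\int r\bfh_z^2$ into the dissipation on the left; then bound $\tfrac12\|u_t(t_1)\|_{2,\T}^2$ by the data plus $\big|\int_0^{t_1}\!\!\int_\T fu_t\big|$ as in \eqref{25}, integrate in $t_1$ over $(0,T)$, apply Young with $\epsilon=1/2T$ as in \eqref{27}--\eqref{28} to control $\|u_t\|_{2,\Pi_T}^2$, and feed this back into \eqref{27} to bound $\big|\int_0^{t_1}\!\!\int_\T fu_t\big|$ by the data plus $2t_1\int_0^{t_1}\!\!\int_\T f^2$; collecting the terms gives \eqref{37}.

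The part needing care — and the place where Corollary~\ref{cor1} and the regularity statements after Corollary~\ref{cor2} are essential — is the rigorous derivation of the two energy identities, since a priori $\bfh$ and $u_t$ are not themselves admissible test functions ($\bfh$ has a time derivative only in $L_2(0,T;H^{-1}(\T))$, $u_t$ has no $L_2$ spatial derivative). I would handle the parabolic block by testing with the Steklov time average $\bfh_\varrho(t):=\varrho^{-1}\int_t^{t+\varrho}\bfh(s)\,ds$ and letting $\varrho\to0^+$: the $\partial_t$-term produces $\tfrac12\|\bfh(t_1)\|_{2,\T}^2-\tfrac12\|\bfh_0\|_{2,\T}^2$ plus a nonpositive remainder $-\tfrac1{2\varrho}\int_0^{t_1}\!\!\int_\T(\bfh(t+\varrho)-\bfh(t))^2$, which tends to $0$ precisely because $\bfh\in V_2^{1,1/2}(\Pi_T)$ (Corollary~\ref{cor1}), so the limit is an identity and not merely an inequality; for the hyperbolic block, the equation holds pointwise in $t$ in $H^{-1}(\T)$ (the relation $\langle u_{tt},\xi\rangle=(f-p(\bfh^2)_z,\xi)-(\nu^2 u_z,\xi_z)$ recorded after Corollary~\ref{cor2}), and $u\in C([0,T];L_2(\T))$, $u_t\in C([0,T];H^{-1}(\T))$, which together with uniqueness for the linear wave problem places $u$ in the energy class and delivers its energy identity. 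That is the only genuinely new input over Theorem~\ref{TIVP}; the Young/iteration step is identical.
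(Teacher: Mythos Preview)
Your overall strategy --- Steklov averaging to justify testing with $\bfh$ and $u_t$, then the Young/bootstrap manipulations of \eqref{25}--\eqref{29} --- is exactly the paper's, and the parabolic half (using Corollary~\ref{cor1} to kill the Steklov remainder) is fine. The problem is the hyperbolic half, where you decouple the system and invoke $\bfh\in L_\infty(\Pi_T)$ to place $g:=f-p(\bfh^2)_z$ in $L_2(\Pi_T)$.

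That step is circular. Lemma~\ref{lem1} is stated for an \emph{arbitrary} weak solution, and the only $L_\infty$ bound available for such a solution is \eqref{40}, which the paper derives \emph{from} \eqref{37} (``Similarly we have done in the proof of the Existence theorem, item (7), we obtain from \eqref{37}\dots''). The bound $\max_{\Pi_T}|\bfh|\le C_4$ inside the proof of Theorem~\ref{TIVP} applies only to the Galerkin limit, not to a generic $(\bfh,u)\in V_2\times W_2^{1,1}$. Without $\bfh\in L_\infty$ you cannot put $(\bfh^2)_z=2\bfh\cdot\bfh_z$ in $L_2(\Pi_T)$, so your appeal to ``uniqueness for the linear wave problem places $u$ in the energy class'' lacks the hypothesis it needs, and the separate hyperbolic energy identity is not justified.

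The paper avoids this precisely by \emph{not} splitting: it runs the Steklov averaging on \eqref{17} and \eqref{18} simultaneously, takes $\eta=p\,h_{l\epsilon}$ and $\zeta=u_{\epsilon t}$ together, and adds before passing to the limit. The two coupling contributions then combine at the $\epsilon$-level and disappear in \eqref{39} as a single cancellation, so one never has to isolate $g$ or argue that $(\bfh^2)_z\in L_2$. If you want to keep your decoupled presentation, you must either supply an \emph{independent} argument that any weak $\bfh\in V_2(\Pi_T)$ with drift $u_t\in L_2$ is bounded (a parabolic $L_\infty$ estimate not relying on \eqref{37}), or --- simpler --- merge the two Steklov computations as the paper does.
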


\begin{proof} 1. Let $\epsilon \in (0,T)$ be fixed and
consider the following functions
$$
   \hat{\eta}_{\bar{\epsilon}}(t,z) = \frac{1}{\epsilon} \int^t_{t-\epsilon}\hat{\eta}(\tau,z)\, d\tau,
   \qquad
   \hat{\zeta}_{\bar{\epsilon}}(t,z)= \frac{1}{\epsilon} \int^t_{t-\epsilon}\hat{\zeta}(\tau,z)\, d\tau,
$$
where $\hat{\eta}, \hat{\zeta} \in W^{1,1}_2((-\epsilon,T) \times\T)$, with
$$
  \text{$\hat{\eta}(\tau,z), \hat{\zeta}(\tau,z) \equiv 0$ for $\tau \in [-\epsilon,0] \cup [T-\epsilon,T]$}.
$$
Then, we take respectively $\hat{\eta}_{\bar{\epsilon}}$, $\hat{\zeta}_{\bar{\epsilon}}$ as $\eta$ and $\zeta$ in (\ref{17})--(\ref{18}). Since
$$
  (\hat{\eta}_{\bar{\epsilon}})_t = (\hat{\eta}_t)_{\bar{\epsilon}},
$$
we could rewrite the first term in (\ref{17}) in the following manner
$$
\begin{aligned}
   -\int_0^{T-\epsilon} \!\!\! \int_{\T} h_l \, \hat{\eta}_{{\bar{\epsilon}} t} \, dz dt
   &=-\int_{-\epsilon}^{T-\epsilon} \!\!\! \int_{\T} h_{l \epsilon} \, \hat{\eta}_t \, dz dt
   \\[5pt]
   &= \int_0^{T-\epsilon} \!\!\! \int_{\T} h_{l \epsilon t} \; \hat{\eta} \, dz dt,\quad l=1,2,
\end{aligned}
$$
where
\[
   h_{l \epsilon}(t):= \frac{1}{\epsilon}\int^{t+\epsilon}_t h_l(\tau) \, d\tau,\quad l=1,2.
\]
Analogously, we have
\[
   -\int_0^{T-\epsilon} \!\!\! \int_{\T} u_t \; \hat{\zeta}_{\bar{\epsilon}t} \, dz dt
   = \int_0^{T-\epsilon} \!\!\! \int_{\T} u_{\epsilon {tt}} \; \hat{\zeta} \, dz dt.
\]
where
\[
   u_{\epsilon}(t):= \frac{1}{\epsilon}\int^{t+\epsilon}_t u(\tau) \, d\tau.
\]

2. Now, we proceed similarly in the remainder terms of (\ref{17})--(\ref{18}), to transfer the averaging operation $(\cdot)_{\bar{\epsilon}}$ from $\hat{\eta}_{\bar{\epsilon}}$, $\hat{\zeta}_{\bar{\epsilon}}$ to respectively coefficients. Then, with the above notation and taking into account the permutability of the averaging operation with differentiation with respect to $z$, we obtain
\begin{align}
   &\int_0^{T-\epsilon} \!\!\! \int_{\T} \Big(h_{l \epsilon t} \, \hat{\eta}
   + \big(r h_{lz} - h_l \, u_t - r \, j_l)_\epsilon \; \hat{\eta}_z \Big)\, dz dt=0, \quad l=1,2,\nonumber\\[-1.5ex]
   \label{38}
   \\[-1.5ex]
   &\int_0^{T-\epsilon} \!\!\! \int_{\T} \Big( \nu^2 \, u_{\epsilon z} \, \hat{\zeta}_z
   + \big( u_{tt}+ p (\bfh^2)_z - f \big)_\epsilon \; \hat{\zeta} \Big) \, dz dt = 0.\nonumber
\end{align}
Let $t_1 \in [0,T-\epsilon]$ be fixed. For convenience, we choose $\hat{\zeta}$ equals zero for each $t \geq t_1$ and $\hat{\zeta}=
\zeta$ on $[0,t_1] \times \T$, for some $\zeta \in W_2^{1,1}((0,t_1) \times \T)$. Analogously, we take $\hat{\eta}=0$ for $t \geq t_1$, and by standard density argument, $\hat{\eta}= \eta$ on $[0,t_1] \times \T$, for some $\eta \in V_2^{1,0}((0,t_1) \times \T)$. Therefore, we obtain from
\eqref{38}
\begin{align*}
   &\int_0^{T-\epsilon} \!\!\! \int_{\T}\Big(h_{l \epsilon t} \, {\eta}
   + \big(r h_{lz} - h_l \, u_t - r \, j_l)_\epsilon \; {\eta}_z \Big)\, dz dt=0, \quad l=1,2,\nonumber
   \\[5pt]
   &\int_0^{T-\epsilon} \!\!\! \int_{\T} \Big( \nu^2 \, u_{\epsilon z} \, {\zeta}_z
   + \big( u_{tt}+ p (\bfh^2)_z - f \big)_\epsilon \; {\zeta} \Big) \, dz dt = 0.\nonumber
\end{align*}
Now, we are allowed to take $\eta= p \, h_{l\epsilon}$ and $\zeta=u_{\epsilon t}$ in the above equalities. Hence taking in account that
\begin{align*}
   & 2 \int_0^{t_1} \!\!\! \int_{\T} h_{l \epsilon t} \, h_{l \epsilon} \, dz dt=
   \int_{\T}h^2_{l \epsilon}(t_1,z) d z - \int_{\T}h^2_{l \epsilon}(0,z) d z,\;l=1,2,
   \\[5pt]
   &2 \int_0^{t_1} \!\!\! \int_{\T} u_{\epsilon tt} \, u_{\epsilon t} \, d z dt
    = \int_{\T} u^2_{\epsilon t}(t_1,z) d z
    - \int_{\T} u^2_{\epsilon t}(0,z) dz,
   \\[5pt]
   &2 \int_0^{t_1} \!\!\! \int_{\T} \nu^2u_{\epsilon z} \, u_{\epsilon z t} \, dz d t
   =\int_{\T} \nu^2(z) \, u^2_{\epsilon z}(t_1,z) \, dz
   -\int_{\T} \nu^2(z) \, u^2_{\epsilon z}(0,z) \, dz,
\end{align*}
and passing to the limit as $\epsilon \to 0^+$, we obtain in analogy with (\ref{24})
\begin{equation}
\label{39}
\begin{aligned}
  \frac{1}{2}\int_{\T}(p \, \bfh^2(t_1,z) &+  u_t^2(t_1,z) + \nu^2 \, u_z^2(t_1,z)) \, dz
  - \frac{1}{2}\int_{\T}(p \, \bfh_{0}^2 +  u_1^2 + \nu^2 \, u_{0,z}^2) \, d z
  \\[5pt]
  &= \int_0^{t_1} \!\!\! \int_{\T} p \, r \big( \bfj\cdot\bfh_{z} - \bfh_{z}^2 \big) \, dz dt
    + \int_0^{t_1} \!\!\! \int_{\T}  f \, u_t \, dz dt \\[5pt]
  &\leq \frac{1}{2}\int_0^{t_1} \!\!\! \int_{\T} p \, r \big( \bfj^2 - \bfh_{z}^2 \big) \, dz dt
    + \int_0^{t_1} \!\!\! \int_{\T}  |f \, u_t| \, dz dt.
\end{aligned}
\end{equation}

3. Finally, from \eqref{39} it follows that
$$
\begin{aligned}
   \int_{\T} u_t^2(t_1,z) \, dz &\leq \int_{\T}( p \, \bfh_{0}^2 + u_1^2 + \nu^2 \, u_{0z}^2) \, dz \\
   &+ \int_0^{t_1} \!\!\! \int_{\T} p \, r \, \bfj^2 \, dz dt + 2 \int_0^{t_1} \!\!\! \int_{\T} |fu_t| \, dz dt.
   \end{aligned}
$$
Then, integrating the above inequality with respect to the time variable from $0$ to $t_1$, we have
$$
\begin{aligned}
   \int_0^{t_1} \!\!\! \int_{\T} u^2_t \, dz dt
   &\leq 2 \, t_1 \int_{\T}(p \, \bfh_{0}^2 + u_1^2 + \nu^2 \, u_{0z}^2) \, dz \\
   &+ 2 \, t_1 \int_0^{t_1} \!\!\! \int_{\T} p \, r \, \bfj^2 \, dz dt + 4 \, t_1^2 \int_0^{t_1} \!\!\! \int_{\T} f^2 \, dz dt,
   \end{aligned}
$$
 where we have used Cauchy's inequality. Again by Cauchy's inequality with $\epsilon= t_1$ in (\ref{39}) and the latter result, we obtain
\begin{multline*}
  \frac{1}{2} \int_{\T} \big(p\bfh^2(t_1) + u_t^2(t_1) + \nu^2 u_z^2(t_1)\big) \, dz
  \leq \int_{\T}(p\bfh_{0}^2 + u_1^2 + \nu^2u_{0z}^2)\, dz \\[5pt]
  + \int_0^{t_1}\!\!\!\int_{\T} p \, r \big( \bfj^2 - \frac{\bfh_{z}^2}{2} \big) \, dz dt
  + 2 \, t_1 \int_0^{t_1}\!\!\!\int_{\T} f^2 \, dz dt.
\end{multline*}
\end{proof}

\medskip
Similarly we have done in the proof of the Existence theorem, item (7), we obtain from (\ref{37})
\begin{equation}\label{40}
\max_{\Pi_T}|\bfh| +  \|\bfh_{}\|_{2,\Pi_T} + \|u\|_{W^{1,1}_2(\Pi_T)} \leq C_5,\quad \bfh\in C^{\alpha_2,\,\alpha_2/2}(\Pi_T),
\end{equation}
where $C_5= C_5(T)>0$ and $\alpha_2 \in (0,\alpha]$.

\bigskip
Now, let us show the uniqueness result.
%
%

\begin{theorem}$(${\bf Uniqueness theorem}$)$.
\label{un} Under conditions of Theorem \ref{TIVP}, the periodically Cauchy diffraction problem \eqref{9}--\eqref{16} has a unique weak solution.
\end{theorem}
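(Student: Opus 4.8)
The plan is to derive an energy estimate for the difference of two solutions and close it by Gronwall's inequality; the weak formulation will be handled by the Steklov time-averaging device already used in the proof of Lemma~\ref{lem1}. Let $(\bfh^{(1)},u^{(1)})$ and $(\bfh^{(2)},u^{(2)})$ be two weak solutions of \eqref{9}--\eqref{16} with the same data, and set $\bfh:=\bfh^{(1)}-\bfh^{(2)}$, $u:=u^{(1)}-u^{(2)}$. Subtracting the identities \eqref{19}--\eqref{20} written for the two solutions, every term containing the data $\bfj,f,\bfh_0,u_0,u_1$ cancels, so $(\bfh,u)$ satisfies the corresponding \emph{homogeneous} weak identities with vanishing initial data; the only nonlinear contributions are the differences
\begin{equation*}
  \bfh^{(1)}u^{(1)}_t-\bfh^{(2)}u^{(2)}_t=\bfh\,u^{(1)}_t+\bfh^{(2)}\,u_t,\qquad
  (\bfh^{(1)})^2-(\bfh^{(2)})^2=\big(\bfh^{(1)}+\bfh^{(2)}\big)\cdot\bfh .
\end{equation*}
Testing the parabolic identity \eqref{19} with $\eta\equiv1$ gives $\int_{\T}\bfh^{(1)}(t)\,dz=\int_{\T}\bfh^{(2)}(t)\,dz$, so $\bfh(t)$ has zero average on $\Omega$ for each $t$; this lets us use the multiplicative inequality of \cite{LSU} already used above in its clean form $\|\bfh(t)\|_{\infty,\T}\le\beta\,\|\bfh_z(t)\|_{2,\T}^{1/2}\,\|\bfh(t)\|_{2,\T}^{1/2}$.

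Next, following the derivation of \eqref{39}, I would pass to the Steklov averages of the test functions, transfer the averaging onto the coefficients, and then choose $\eta=p\,\bfh_\epsilon$ in the parabolic identities and $\zeta=u_{\epsilon t}$ in the hyperbolic one. The point is that these averaged functions belong to $W^{1,1}_2(\Pi_T)$ — their spatial and temporal derivatives lie in $L_2(\Pi_T)$ precisely because $u_t,u_z\in L_\infty(0,T;L_2(\T))$, a bound provided by \eqref{37} — so they are admissible. Letting $\epsilon\to0^+$ exactly as in \eqref{39}, one arrives, for a.e.\ $t_1\in[0,T]$, at the identity
\begin{equation*}
  E(t_1)+\int_0^{t_1}\!\!\!\int_{\T}p\,r\,\bfh_z^2\,dz\,dt=\int_0^{t_1}\!\!\!\int_{\T}\mathcal{R}\,dz\,dt,
\end{equation*}
where $E(s):=\tfrac12\int_{\T}\big(p\,\bfh^2(s)+u_t^2(s)+\nu^2u_z^2(s)\big)\,dz$ already incorporates the vanishing initial energy ($E(0)=0$), and $\mathcal{R}$ is a finite sum, with bounded weights, of terms of the three forms $p\,r\,(\bfh\cdot\bfh_z)\,u^{(i)}_t$, $\ p\,\big(\bfh^{(1)}+\bfh^{(2)}\big)\!\cdot\!\bfh_z\;u_t$ and $\ p\,\big(\bfh^{(i)}_z\!\cdot\!\bfh\big)\,u_t$ coming from the two decompositions above.

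It remains to estimate $\mathcal{R}$. Using the a priori bounds \eqref{37} and \eqref{40} — so that $\max_{\Pi_T}|\bfh^{(i)}|\le C_5$, $\ u^{(i)}_t\in L_\infty(0,T;L_2(\T))$ and $\ \bfh^{(i)}_z\in L_2(\Pi_T)$ — together with H\"older's inequality in $z$, the multiplicative inequality above, and Young's inequality, each term of $\mathcal{R}$ can be bounded by $\delta\,\|\bfh_z(t)\|_{2,\T}^2+\psi(t)\,E(t)$ with $\delta>0$ arbitrarily small and $\psi\in L_1(0,T)$. The only subtlety is to keep the factor $\|u_t(t)\|_{2,\T}$, wherever it appears, as $E(t)^{1/2}$ rather than bounding it by $C_5$, so that the resulting weight is genuinely proportional to $E(t)$; in the terms carrying $\bfh^{(i)}_z$ this yields $\psi(t)\sim\|\bfh^{(1)}_z(t)\|_{2,\T}^{4/3}+\|\bfh^{(2)}_z(t)\|_{2,\T}^{4/3}$, which is only integrable (not bounded) in $t$, because $\bfh^{(i)}_z\in L_2(\Pi_T)$. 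Absorbing the terms $\delta\|\bfh_z(t)\|_{2,\T}^2$ into $\int_0^{t_1}\!\int_{\T}p\,r\,\bfh_z^2\ge p\,r_0\int_0^{t_1}\!\int_{\T}\bfh_z^2$ (choose $\delta$ small relative to $p\,r_0$) gives $E(t_1)\le\int_0^{t_1}\psi(t)\,E(t)\,dt$ for a.e.\ $t_1$; since the right-hand side of the energy identity tends to $0$ as $t_1\to0^+$ (the integrand of $\mathcal{R}$ lies in $L_1(\Pi_T)$), one has $E(t_1)\to0$, and the integral form of Gronwall's lemma forces $E\equiv0$. Hence $\bfh\equiv0$ and $u_t\equiv u_z\equiv0$ a.e.\ in $\Pi_T$; since $u\in C([0,T];L_2(\T))$ and $u(0)=0$, also $u\equiv0$, which is the asserted uniqueness.

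I expect the genuine obstacle to be the rigorous derivation of the energy identity: carrying out the Steklov-averaging manipulations on the difference system as in Lemma~\ref{lem1}, and — above all — verifying that $p\,\bfh_\epsilon$ and $u_{\epsilon t}$ are legitimate elements of $W^{1,1}_2(\Pi_T)$, which rests entirely on the bound $u_t,u_z\in L_\infty(0,T;L_2(\T))$. The subsequent estimate of $\mathcal{R}$ is routine, the one point demanding attention being the one highlighted above: keep $\|u_t(t)\|_{2,\T}=E(t)^{1/2}$, and accept an $L_1$-in-time Gronwall weight rather than a bounded one.
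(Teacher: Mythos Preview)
Your proposal is correct and essentially complete; the energy identity you derive via Steklov averaging coincides with the paper's equation \eqref{47}, and your handling of the admissibility of $p\,\bfh_\epsilon$ and $u_{\epsilon t}$ via Lemma~\ref{lem1} is exactly right. The argument diverges from the paper only in how the right-hand side is closed.

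The paper does \emph{not} use the pointwise Gagliardo--Nirenberg inequality on the difference $\bfv$. Instead, after reaching \eqref{47} it bounds the right-hand side crudely by $p\,C_5\,\max_{\Pi_T}|\bfv|\,(\|\bfv_z\|_{2,\Pi_T}+\|w_t\|_{2,\Pi_T})$ and then invokes the parabolic $L_\infty$-estimate of \cite[Thm.~8.1]{LSU} on the linear equation \eqref{42} (viewing $-\bfh_2 w_t$ as the source) to obtain $\max_{\Pi_T}|\bfv|\le C_6\,\|\bfh_2 w_t\|_{2,\Pi_T}\le C_5 C_6\,\|w_t\|_{2,\Pi_T}$. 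Substituting back and integrating in $t_1$ yields an inequality of the form $\|w_t\|_{2,\Pi_T}^2\le 2pT_0 C_7(T_0)\,\|w_t\|_{2,\Pi_T}^2$, which forces $\bfv,w\equiv 0$ on a short interval $[0,T_1]$; uniqueness on $[0,T]$ then follows by iteration. Your route is more elementary and self-contained: the multiplicative inequality replaces the appeal to parabolic maximum-norm regularity, and the $L_1$-in-time Gronwall weight lets you close on the full interval in one stroke, avoiding the small-time/continuation step. The paper's approach, on the other hand, yields as a by-product the global bound $\max_{\Pi_T}|\bfv|\le C\|w_t\|_{2,\Pi_T}$, which is of independent interest for the stability analysis in Section~\ref{SS}. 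A minor slip: the factor $r$ in your first listed term of $\mathcal{R}$ is spurious (compare \eqref{47}), but this is harmless since $r$ is bounded.
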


\begin{proof} 1. First, let $\bfh_{k}, u_k$, $(k=1,2)$, be two weak solutions of problem (\ref{9})--(\ref{16}), well understood, for the same given initial data. For convenience, we define
\begin{equation}\label{41}
\text{$\bfv:= \bfh_{2} - \bfh_{1}$ and $w:= u_2 - u_1$}.
\end{equation}
Therefore, the functions $\bfv$ and $w$ form a weak solution of the homogeneous problem
\begin{align}
  &\bfv_{t} = \big(r \, \bfv_{z} - \bfh_2 \, w_t - u_{1t} \, \bfv \big)_z \quad \text{in $\Pi_T$}, \label{42}
  \\
  &w_{tt} = (\nu^2 \, w_z -p(\bfh_2+\bfh_1)\cdot\bfv)_z \quad \text{in $\Pi_T$},\label{43}
  \\[3pt]
  &\bfv =\mathbf0,\quad w = w_t = 0 \quad \text{on $\{0\}\times\T $}.\label{45}
\end{align}
Moreover, $\bfv$ and $w$ satisfy the compatibility conditions (\ref{15}) and (\ref{16}) at the jump points. Then, we have the following equations satisfied
\begin{align}
  &\iint_{\Pi_T}\big(-v_l \, \eta_t + (rv_{lz} - h_{l2} \, w_t - u_{1t} \, v_l) \, \eta_z \big)\, dzdt = 0,\quad l=1,2,
  \label{UTE1}
  \\
  &\iint_{\Pi_T}\big(-w_t \, \zeta_t + \nu^2 \, w_z \, \zeta_z + p((\bfh_2+\bfh_1)\cdot\bfv)_z\zeta\big)\, dzdt = 0.
  \label{UTE2}
\end{align}

2. We proceed as in the proof of Lemma \ref{lem1}. Hence by analogy with (\ref{38}), we obtain from \eqref{UTE1}, \eqref{UTE2}
\begin{align}
  &\int_0^{T-\epsilon} \!\!\! \int_{\T} \big( v_{l \epsilon t}\hat{\eta}+ (r \, v_{l\epsilon} -
  h_{l2}w_t - u_{1t}v_l)_\epsilon \, \hat{\eta}_z\big) \, dzdt = 0, \quad l=1,2,\nonumber\\[-1.5ex]
  \label{46}\\[-1.5ex]
  &\int_0^{T-\epsilon} \!\!\! \int_{\T} \big(\nu^2 \, w_{\epsilon z}\hat{\zeta}_z + (w_{tt} + p((\bfh_2+\bfh_1)\cdot\bfv)_\epsilon
  \,\hat{\zeta}\big) \, dzdt = 0.\nonumber
\end{align}
Moreover, for $t_1\in(0,T-\epsilon]$, we choose conveniently $\hat{\eta}$, $\hat{\zeta}$ as
$$
 \hat{\eta}(t) = \left\{
                      \begin{array}{rl}
                        v_{l \epsilon}(t),&\, \mbox{if} \; t\in(0,t_1]\\
                        0,& \,\mbox{otherwise},
                      \end{array}\right.
$$
and
$$
\hat{\zeta}(t) = \left\{
                       \begin{array}{rl}
                           w_{\epsilon t}(t),&\, \mbox{if} \; t\in(0,t_1]\\
                           0,& \,\mbox{otherwise},
                   \end{array}\right.
$$
thus we observe that
$$
\begin{aligned}
  2 \int_0^{T-\epsilon} \!\!\! \int_{\T} v_{l \epsilon t}v_{l \epsilon} \, dz dt &= \int_{\T} v_{l \epsilon}^2(t_1) \, dz
  - \int_{\T} v_{l \epsilon}^2(0) \, dz,\quad l=1,2,\\
  2 \int_0^{T-\epsilon} \!\!\! \int_{\T} w_{\epsilon tt} \, w_{\epsilon t} \, dz dt &= \int_{\T} w_{\epsilon t}^2(t_1) \, dz
  -\int_{\T} w_{\epsilon t}^2(0) \, dz,\\
  2 \int_0^{T-\epsilon} \!\!\! \int_{\T} w_{\epsilon t}w_\epsilon \, dz dt &= \int_{\T} w_\epsilon^2(t_1) \, dz
  -\int_{\T} w_\epsilon^2(0) \, dz.
\end{aligned}
$$
Now, we multiply the former equality in (\ref{46}) by $p$, sum with the second one, and with such choice and observations, it
follows that
\begin{multline}\label{47}
  \frac{p}{2}\|\bfv(t_1)\|^2_{2,\T} + \frac{1}{2}\|w_t(t_1)\|^2_{2,\T}
  + \frac{1}{2}\|\nu \, w_z(t_1)\|^2_{2,\T} + \int_0^{t_1} \!\!\! \int_{\T} p \, r \, \bfv_{z}^2 \, dz dt \\
  = \int_0^{t_1} \!\!\! \int_{\T} p \,\big(u_{1t} \, v_{1z} \, v_1 - u_{1 t} \, v_{2z} \, v_2 - h_{11z} \,
   v_1 \, w_t - h_{21z} \, v_2 \, w_t \big)\, dz dt,
\end{multline}
where we have passed to the limit as $\epsilon \rightarrow 0^+$ and used the homogeneous initial data \eqref{45}.
The result \eqref{40}, obtained from Lemma \ref{lem1}, is now used to estimate the integral in the right hand side of \eqref{47}.
Then, we have
\begin{multline*}
  \Big|\int_0^{t_1} \!\!\! \int_{\T} p \, \big(u_{1t} \, v_{1z} \, v_1 + u_{1t} \, v_{2z} \, v_2
  + h_{1z} \, v_1 \, w_t + h_{2z} \, v_2 \, w_t) \, dz dt \Big| \\
  \leq p \, C_5 \, \max_{\Pi_T}|\bfv| \Big(\|v_{1z}\|_{2,\Pi_T} + \|v_{2z}\|_{2,\Pi_T}+ \|w_t\|_{2,\Pi_T} \Big).
\end{multline*}
Therefore, for almost all $t_1 \in [0,T]$, we have from \eqref{47} and the above inequality
\begin{multline}\label{48}
  \frac{p}{2}\|\bfv(t_1)\|^2_{2,\T} + \frac{1}{2}\|w_t(t_1)\|^2_{2,\T}
  + \frac{1}{2}\|\nu \, w_z(t_1)\|^2_{2,\T} + \int_0^{t_1} \!\!\! \int_{\T} p \, r \, \bfv_{z}^2 \, dz dt \\
  \leq p \, C_5 \, \max_{\Pi_T}|\bfv| \Big(\|\bfv_{z}\|_{2,\Pi_T} + \|w_t\|_{2,\Pi_T} \Big).
\end{multline}

\medskip
3. Finally, let us show that $\bfv\equiv\mathbf{0}$ and $w\equiv0$ in $\Pi_T$. Consider the equation \eqref{42}, rewritten as
$$
  \bfv_{t} + \big(u_{1t} \, \bfv \big)_z - \big(r \, \bfv_{z} \big)_z = \big(- \bfh_2 \, w_t \big)_z,
$$
%
and recall the homogeneous initial condition $\bfv(0)= \mathbf{0}$. Thus, if $\|\bfh_{2}w_t\|_{2,\Pi_T}=0$, then the unique solution is
$\bfv\equiv \mathbf{0}$. Consequently, by equation \eqref{43} with $w(0)=w_t(0)=0$, we have $w \equiv 0$, and we are done. On the other hand, if $\|\bfh_{2}w_t\|_{2,\Pi_T}= \varpi > 0$, then from Theorem 8.1 \cite[p.192]{LSU}, again interpreting $Q= [0,T] \times [-1,2]$, $Q'= [0,T] \times [0,1]$ and by the periodicity procedure, there exists a constant $C_6(T_0) > 0$, which depends only on $T_0$ whenever $T \in (0,T_0]$, such that
\[
\max_{\Pi_T}|\bfv / \varpi| \leq C_6(T_0),\quad \forall T\in(0,T_0].
\]
Then, it follows that
$$
  \begin{aligned}
  \max_{\Pi_T}|\bfv| &\leq  \|\bfh_{2}\|_{2,\Pi_T} \|w_t\|_{2,\Pi_T} \, C_6(T_0)
  \\
  & \leq C_5(T_0) \, C_6(T_0) \, \|w_t\|_{2,\Pi_T},
  \end{aligned}
$$
for each $T \in (0,T_0]$. Hence from \eqref{48} and the above
inequality, we obtain
$$
\begin{aligned}
   \frac{p}{2}\|\bfv(t_1)\|^2_{2,\T} &+ \frac{1}{2}\|w_t(t_1)\|^2_{2,\T} + \frac{1}{2}\|\nu w_z(t_1)\|^2_{2,\T}
   + \int_0^{t_1} \!\!\! \int_{\T} p \, r \, \bfv_{z}^2 \, dz dt
   \\[5pt]
   &\leq p \, C_5^2(T_0) \, C_6(T_0) \Big(\|\bfv_{z}\|_{2,\Pi_T}\|w_t\|_{2,\Pi_T} +  \|w_t\|^2_{2,\Pi_T} \Big)
\end{aligned}
$$
Now, applying  the Cauchy inequality with $\epsilon = 2 \, r_0 \, \big(C_5(T_0)\big)^{-2} \, \big(C_6(T_0)\big)^{-1}$, we obtain for
almost all $t_1\in[0,T]$, $T\in(0,T_0]$
$$
  \frac{p}{2}\|\bfv(t_1)\|^2_{2,\T} + \frac{1}{2}\|w_t(t_1)\|^2_{2,\T}  + \frac{1}{2}\|\nu w_z(t_1)\|^2_{2,\T}  \leq p \, C_7(T_0) \, \|w_t\|^2_{2,\Pi_T},
$$
where
$$
C_7(T_0) = C_5^2(T_0) \, C_6(T_0) \, \Big(1 + \frac{C_5^2(T_0)C_6(T_0)}{4r_0} \Big)
$$
and integrating with respect to $t_1$ from $0$ to $T$, with $T\in(0,T_0]$, we have
\begin{equation}
\label{FUR}
 \frac{p}{2}\|\bfv\|^2_{2,\Pi_T} + \frac{1}{2}\|w_t\|^2_{2,\Pi_T} + \frac{\nu_0^2}{2}\|w_z\|^2_{2,\Pi_T}\leq p \, T_0 \, C_7(T_0) \,
\|w_t\|^2_{2,\Pi_T}.
\end{equation}
The inequality \eqref{FUR} is false when
$$
  p \, T_0 \, C_7(T_0)< 1/2,
$$
whenever $w_t$, $w_z$ and $\bfv$ are non-zero functions. Consequently, taking
$$
  T_1= \min \Big\{ \Big(4 \, p \, C_7(T_0)\Big)^{-1}, T_0 \Big\},
$$
we must have $\bfv\equiv\mathbf{0},w \equiv0$ in $\Pi_{T_1}$. The
result follows applying the above procedure recursively, that is,
after a finite number of steps, we get
$\bfv\equiv\mathbf{0},w\equiv0$ in $\Pi_T$.
\end{proof}

\bigskip
\subsection{Stability of weak solution}\label{SS}

The aim of this section is to study the stability of the weak solutions of the periodically Cauchy diffraction problem
\eqref{9}--\eqref{16}, with respect to variations of all coefficients, the free terms and the initial-data.

\medskip
In Sections  \ref{EWS} and \ref{ss.3.2}, we have shown that the
Cauchy problem \eqref{9}--\eqref{16} has a unique weak solution
$$
  \Big(\hh \in V_{2}(\Pi_T), \quad u \in W^{1,1}_{2}(\Pi_T) \Big),
$$
satisfying some additional properties, for instance
\begin{equation}\label{49}
    \max_{\Pi_T}|\bfh| + \|\bfh_z\|_{2,\Pi_T} + \|u\|_{W^{1,1}_2(\Pi_T)} \leq C_8,
\end{equation}
where the positive constant $C_8$ does not depend on $\bfh,u$.
Therefore, for each $m \in \nat$ we consider the following
sequence of problems associated to \eqref{9}--\eqref{16}
\begin{align}
  \bfh^m_t & = \Big(r^m\bfh^m_{z} - \bfh^mu_t^m - r^m\bfj^m \Big)_z \quad \text{in $\Pi_T$},\label{50}
  \\[5pt]
  u^m_{}tt & = \Big((\nu^m)^2 \, u_z^m - p(\bfh^m)^2 \Big)_z + f^m \quad \text{in $\Pi_T$}, \label{51}
  \\[5pt]
  \bfh^m &=\bfh^m_{0} \quad \text{on $\{0\} \times \T$}, \label{52}
  \\[5pt]
  u^m &=u_0^m,\; u^m_t=u_1^m \quad \text{on $\{0\} \times \T$}. \label{53}
\end{align}

In fact, by a standard density argument, it is enough to suppose that
$$
  r^m(z), \, \bfj^m(t,z), \, \bfh_{0}^m(z),  \, u_0^m(z),\, u_1^m(z), \, f^m(t,z)
$$
are smooth functions satisfying the conditions of the uniqueness and existence theorems. Then, for each $m \in \nat$, there exists
a unique weak solution
$$
  \Big(\hh^m \in V_{2}(\Pi_T), \quad u^m \in W^{1,1}_{2}(\Pi_T) \Big),
$$
satisfying
\begin{equation}\label{49}
    \max_{\Pi_T}|\bfh^m| + \|\bfh^m_z\|_{2,\Pi_T} + \|u^m\|_{W^{1,1}_2(\Pi_T)} \leq C_9,
\end{equation}
where the positive constant $C_9$ does not depend on $\bfh^m$, $u^m$ and $m$. Moreover, the transmission conditions
\eqref{15}--\eqref{16} can be dropped owing to the smoothness of the solution.
\begin{theorem}$(${\bf Stability theorem}$)$.
\label{st} Suppose that the sequences $\{r^m\}$ and $\{\nu^m\}$ are uniformly bounded and converge a.e. to $r, \nu$  respectively.
Also the sequences $\{\bfj^m\},\{f^m\}, \{\bfh_{0}^m\},\{u_0^m\}, \{u^m_1\}$ converge to $\bfj, f$, $\bfh_{0},u_0,u_1$ in the
respectively norms of the spaces to which they belong according to conditions of Theorem \ref{EWS}.
Then, the sequence of weak solutions $\{\big(\bfh^m, u^m\big)\}_{m= 1}^\infty$,
$$
  \Big( \bfh^m \in V_2^{1,0}(\Pi_T), u^m \in W_2^{1,1}(\Pi_T)
  \Big) \qquad ( \forall \, m \geq 1),
$$
of the associated problems \eqref{50}--\eqref{53} converges in such spaces to the weak solution $(\bfh,u)$ of the limit problem \eqref{9}--\eqref{16}.
\end{theorem}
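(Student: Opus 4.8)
The plan is to argue by compactness together with the uniqueness of Theorem~\ref{un}, and then to upgrade the resulting weak convergence to strong convergence in the energy norms by running, for the differences $\bfh^m-\bfh$ and $u^m-u$, the same energy computation as in Lemma~\ref{lem1} and the proof of Theorem~\ref{un}, now carrying extra source terms built from the perturbed data. First I would record that the bound \eqref{49} holds with a constant independent of $m$ (this uses only the uniform bounds on $\{r^m\},\{\nu^m\}$ and the boundedness of the data sequences in their norms), and that, exactly as in items (5)--(6) of the proof of Theorem~\ref{TIVP}, $\{\bfh^m_t\}$ and $\{u^m_{tt}\}$ are bounded in $L_2(0,T;H^{-1}(\T))$, $\{\bfh^m\}$ in $L_2(0,T;W^1_2(\T))$, $\{u^m\}$ in $W^{1,1}_2(\Pi_T)\cap L_\infty(0,T;W^1_2(\T))$ and $\{u^m_t\}$ in $L_\infty(0,T;L_2(\T))$. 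The Aubin--Lions theorem and the Sobolev imbedding then give, along a subsequence, $\bfh^m\to\bfh^\ast$ and $u^m\to u^\ast$ strongly in $L_2(\Pi_T)$ and a.e.\ in $\Pi_T$, with $\bfh^m_z\rightharpoonup\bfh^\ast_z$, $u^m_z\rightharpoonup u^\ast_z$ weakly in $L_2(\Pi_T)$, $u^m\to u^\ast$ $\ast$-weakly in $L_\infty(0,T;W^1_2(\T))$ and $u^m_t\to u^\ast_t$ $\ast$-weakly in $L_\infty(0,T;L_2(\T))$; since $\|\bfh^m\|_{\infty,\Pi_T}\le C_9$, the a.e.\ convergence upgrades $\bfh^m\to\bfh^\ast$ to $L_q(\Pi_T)$ for every $q<\infty$.

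Next I would pass to the limit in the weak identities \eqref{17}--\eqref{18} written for the $m$-th problem. The nonlinear term $p(\bfh^m)^2$ converges to $p(\bfh^\ast)^2$ by the a.e.\ convergence and the uniform $L_\infty$ bound; the coupling term $\bfh^m u^m_t$ is handled by splitting it as $(\bfh^m-\bfh^\ast)u^m_t+\bfh^\ast(u^m_t-u^\ast_t)$, the first summand going to $0$ in $L_1(\Pi_T)$ and the second testing against the fixed test function by $\ast$-weak convergence (here $\bfh^\ast\in L_\infty(\Pi_T)$); the coefficient terms $r^m\bfh^m_z$, $(\nu^m)^2u^m_z$ converge weakly in $L_1(\Pi_T)$ to $r\bfh^\ast_z$, $\nu^2u^\ast_z$ because $r^m\eta_z\to r\eta_z$ and $(\nu^m)^2\zeta_z\to\nu^2\zeta_z$ strongly in $L_2(\Pi_T)$ for fixed test functions (dominated convergence, using the uniform bounds and a.e.\ convergence of $r^m,\nu^m$) while $\bfh^m_z,u^m_z$ converge weakly; the remaining data terms converge in the stated norms. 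Hence $(\bfh^\ast,u^\ast)$ is a weak solution of \eqref{9}--\eqref{16} with the limit data, so by Theorem~\ref{un} it coincides with $(\bfh,u)$; being the same for every subsequence, the whole sequence converges in all the senses above.

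It then remains to prove strong convergence of $\bfh^m_z,u^m_z,u^m_t$ in $L_2(\Pi_T)$. I would set $\bfv^m:=\bfh^m-\bfh$, $w^m:=u^m-u$; subtracting the equations and regrouping, $\bfv^m$ solves a parabolic equation with divergence-form right-hand side
\[
  \big(r^m\bfv^m_z+(r^m-r)\bfh_z-\bfh^m w^m_t-u_t\,\bfv^m-r^m(\bfj^m-\bfj)-(r^m-r)\bfj\big)_z,
\]
while $w^m$ solves the wave equation with right-hand side $\big((\nu^m)^2w^m_z+((\nu^m)^2-\nu^2)u_z-p(\bfh^m+\bfh)\cdot\bfv^m\big)_z+(f^m-f)$, with initial data $\bfv^m(0)=\bfh^m_0-\bfh_0$, $w^m(0)=u^m_0-u_0$, $w^m_t(0)=u^m_1-u_1$. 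Running the mollification-in-time argument of Lemma~\ref{lem1} (test the first equation with $p\,\bfv^m_{\bar\epsilon}$, the second with $w^m_{\bar\epsilon t}$, then let $\epsilon\to0^+$) produces an energy identity whose left side is $E^m(t_1):=\tfrac{p}{2}\|\bfv^m(t_1)\|^2_{2,\T}+\tfrac12\|w^m_t(t_1)\|^2_{2,\T}+\tfrac12\|\nu^m w^m_z(t_1)\|^2_{2,\T}+p\!\int_0^{t_1}\!\!\int_\T r^m(\bfv^m_z)^2\,dzdt$, and whose right side splits into (i) the initial-data terms and the terms carrying the factors $r^m-r$, $\nu^m-\nu$, $\bfj^m-\bfj$, $f^m-f$, which — after the necessary integrations by parts in $z$ and $t$, using that $u_z$ inherits a time-derivative in $L_\infty(0,T;H^{-1}(\T))$ from $u_t$ (Corollary~\ref{cor2} and the regularity noted after it) — are bounded by a quantity $\delta_m:=\|\bfh^m_0-\bfh_0\|_{2,\T}^2+\|u^m_0-u_0\|^{2}_{W^1_2(\T)}+\|u^m_1-u_1\|_{2,\T}^2+\|(r^m-r)\bfh_z\|_{2,\Pi_T}+\mbox{vrai}\max_{t_1\in[0,T]}\|((\nu^m)^2-\nu^2)u_z(t_1)\|_{2,\T}+\|\bfj^m-\bfj\|_{2,\Pi_T}+\|f^m-f\|_{2,\Pi_T}$, which tends to $0$ by the norm convergence of the data and by dominated convergence for the coefficient differences (each multiplied against the fixed $L_2$-functions $\bfh_z,u_z$ of the limit solution and a factor controlled by \eqref{49}); and (ii) coupling terms of exactly the same structure as in the proof of Theorem~\ref{un}. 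Estimating $\max_{\Pi_T}|\bfv^m|$ by Theorem~8.1 of \cite[p.192]{LSU} applied to the $\bfv^m$-equation (with the periodicity device $Q=[0,T]\times[-1,2]$, $Q'=[0,T]\times[0,1]$, as there) and absorbing the coupling terms by Cauchy's inequality with a small parameter, one obtains, on a short interval $[0,T_1]$ with $T_1$ depending only on the constants in \eqref{49} and on $p,r_0,\nu_0$, a bound $E^m(T_1)\le c\,\delta_m\to0$; iterating over finitely many intervals of length $T_1$ yields $\bfv^m\to0$ in $V_2^{1,0}(\Pi_T)$ and $w^m\to0$ in $W^{1,1}_2(\Pi_T)$, which is the assertion.

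I expect the main obstacle to be precisely this last energy estimate: one must simultaneously verify that \emph{every} coefficient- and data-perturbation contribution to $\delta_m$ genuinely vanishes — the delicate one being the $\nu^m-\nu$ term, which only becomes harmless after integrating by parts in both space and time and exploiting the extra time-regularity of $u_z$ — and that the residual coupling terms can be absorbed, for which the parabolic maximum estimate for $\bfv^m$ and the short-time/finite-iteration argument of Theorem~\ref{un} are indispensable. Everything else is a routine combination of the compactness tools used in Theorem~\ref{TIVP} and the uniqueness of Theorem~\ref{un}.
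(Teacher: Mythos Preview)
Your first two paragraphs are essentially the paper's entire proof: the paper also argues by compactness-plus-uniqueness, wrapping it in a contradiction (``assume non-convergence, extract a weakly convergent subsequence, identify the limit via the uniqueness theorem, contradiction''); the treatment of the coefficient terms $r^m\bfh^m_z$, $(\nu^m)^2u^m_z$ and of the nonlinear couplings $\bfh^m u^m_t$, $(\bfh^m)^2$ is the same as yours.

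Your third paragraph---the energy identity for the differences $(\bfv^m,w^m)$, the parabolic $L_\infty$ estimate for $\bfv^m$ via Theorem~8.1 of \cite{LSU}, and the short-time absorption/iteration---goes beyond what the paper does in this proof. The paper's contradiction, as written, only delivers the weak/weak-$\ast$ modes \eqref{LMH}--\eqref{LMUT} for the full sequence; it does not upgrade to norm convergence in $V_2^{1,0}(\Pi_T)\times W_2^{1,1}(\Pi_T)$. What you propose for that upgrade is precisely the content of the paper's subsequent Theorem~\ref{lem2}, which is stated separately and only under the extra hypothesis that $\nu$ is smooth---and the reason is exactly the obstacle you flag: the term $((\nu^m)^2-\nu^2)u_z$ in the $w^m$-energy requires, after the integration by parts in $t$ you suggest, more time-regularity of $u_z$ than the general weak solution supplies. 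So your compactness/uniqueness half matches the paper, while your strong-convergence half is a genuine addition that the paper itself only secures as a refinement under a stronger assumption on $\nu$; note in particular that the quantitative estimate \eqref{55} carries no $(\nu^m-\nu)$ contribution.
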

\begin{proof} We argue by contradiction. Therefore, let us assume that the sequence $\{\big(\bfh^m, u^m\big)\}_{m=1}^\infty$ does not converge to $(\bfh,u)$, and since it is uniformly bounded, we can extract a subsequence, namely $\{\big(\bfh^{m_i}, u^{m_i}\big)\}_{i= 1}^\infty$, such that
\begin{align}
  \bfh^{m_i} \rightharpoonup \hat{\bfh}   \quad &\text{weakly in $L_\infty(0,T; L_2(\T))$},\label{LMH}
  \\[5pt]
  \bfh_z^{m_i} \rightharpoonup \hat{\bfh}_z   \quad &\text{weakly in $L_2(\Pi_T)$}, \label{LMHZ}
  \\[5pt]
  u^{m_i} \rightharpoonup \hat{u}   \quad &\text{weakly in $L_\infty(0,T; W^1_2(\T))$}, \label{LMU}
  \\[5pt]
  u^{m_i}_t \rightharpoonup \hat{u}_t   \quad &\text{weakly in $L_\infty(0,T; L_2(\T))$}. \label{LMUT}
\end{align}
By the estimates obtained from the existence and uniqueness theorems, it is not difficult to show that, for each $\eta,\zeta
\in W_2^{1,1}(\Pi_T)$
$$
\begin{aligned}
  - \iinte_{\Pi_T} \bfh^{m_i} \, \eta_t \, dz dt &\to - \iinte_{\Pi_T} \hat{\bfh} \, \eta_t \, dz dt,
\\
  \iinte_{\Pi_T} r \, h_{l z}^{m_i} \, \eta_z \, dz dt &\to \iinte_{\Pi_T} r \, \hat{h}_{l z} \, \eta_z \, dz dt, \quad
   l=1,2,
\\
  \iinte_{\Pi_T} u_t^{m_i} \, \zeta_t \, dz dt &\to \iinte_{\Pi_T} \hat{u}_t \, \zeta_t \, dz dt,
\\
  \iinte_{\Pi_T} (\nu^{m_i})^2 \, u_{z}^{m_i} \, \zeta_z \, dz dt &\to \iinte_{\Pi_T} \nu^2 \, \hat{u}_{z} \, \zeta_z \, dz dt.
\end{aligned}
$$
Moreover, if we proceed as in the proof of the existence theorem, then we can show that
$$
\begin{aligned}
  - \iinte_{\Pi_T} h^{m_i}_l \, u_t^{m_i} \, \eta_z \, dz dt &\to - \iinte_{\Pi_T} \hat{h}_l \, \hat{u} \, \eta_z \, dz dt, \quad  l=1,2,
\\
  \iinte_{\Pi_T} p \, ((\bfh^{m_i})^2)_z \, \zeta \, dz dt &\to\iinte_{\Pi_T} p \, (\hat{\bfh}^2)_z \, \zeta \, dz dt,
\end{aligned}
$$
for all $\eta,\zeta \in W_2^{1,1}(\Pi_T)$. Consequently, from the convergence above we conclude that $(\hat{\bfh}, \hat{u})$ is a weak solution weak the Cauchy problem \eqref{9}--\eqref{16} with initial data $\bfh_0$, $u_0$ and $u_1$. But, by uniqueness of the solution, we have
$$
  \bfh \equiv \hat{\bfh} \quad \text{and} \quad u \equiv \hat{u},
$$
which is a contradiction with our initial assumption.
\end{proof}

\bigskip

Besides we have proven the stability result, we give a more
refined result concerning the estimative. The result is
established in the case when $\nu$ is supposed to be a smooth
enough function.
\begin{theorem}\label{lem2}
Let $\bfh,u$ and $\bfh^m,u^m$, for each $m \in \nat$, be weak solutions of problems \eqref{9}--\eqref{16} and the associated
\eqref{50}--\eqref{53} respectively, and define
$$
   \bfv^m:=\bfh^m-\bfh, \qquad w^m:=u^m-u.
$$
Then, there exists a positive number $\delta$, independent of $\bfh,u,\bfh^m,u^m$, $t_1,t_2,$ such that for any $t_1,t_2\in[0,T]$, $0\leq t_2-t_1<\delta,$ the following inequality holds true
\begin{multline}\label{55}
   {\rm vrai}\max_{t\in[t_1,t_2]}
   \Big\{ p \, \|\bfv^m\|^2_{2,\T} +\|w^m_t\|^2_{2,\T} + 2 \, \|\nu w^m_z\|^2_{2,\T}\Big\} +2 \, p \, r_0 \, \|\bfv^m_{z}\|^2_{2,\Pi_{t_1t_2}}
   \\[5pt]
   \leq p \, C_{10} \{\|(r^m-r)\bfh_{z}\|^2_{2,\Pi_{t_1t_2}} + \|r^m \, \bfj^m - r \, \bfj\|^2_{2,\Pi_{t_1t_2}} \Big\}
   + 2 \, p \, \|\bfv^m(\cdot,t_1)\|^2_{2,\T}
   \\[5pt]
   + \|f^m-f\|^2_{2,\Pi_{t_1t_2}} + 2 \, \|w^m_t(t_1)\|^2_{2,\T} + 2 \, \|\nu \, w_z^m(t_1)\|^2_{2,\T}\,,
\end{multline}
where $\Pi_{t_1t_2}= (t_1,t_2) \times \T$, and $C_{10}$ is a positive constant independent of $t_1,t_2$.
\end{theorem}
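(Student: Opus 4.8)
The plan is to reproduce the short–time energy estimate that underlies the uniqueness proof (Theorem~\ref{un}), this time keeping track of the differences of the data, and then to absorb the coupling terms by using that $t_2-t_1$ is small together with the a priori bounds \eqref{49}, which are uniform in $m$. \textbf{Step~1 (difference system).} Subtracting the equations for $(\bfh^m,u^m)$ from those for $(\bfh,u)$ and using $r^m\bfh^m_z-r\bfh_z=r^m\bfv^m_z+(r^m-r)\bfh_z$, $\ \bfh^m u^m_t-\bfh u_t=\bfh^m w^m_t+u_t\,\bfv^m$, $\ (\bfh^m)^2-\bfh^2=(\bfh^m+\bfh)\cdot\bfv^m$, one sees that $(\bfv^m,w^m)$ is a weak solution, in the sense of Definition~\ref{def1}, of
\[
 \bfv^m_t=\big(r^m\bfv^m_z-\bfh^m\,w^m_t-u_t\,\bfv^m+(r^m-r)\bfh_z-(r^m\bfj^m-r\bfj)\big)_z,
\]
\[
 w^m_{tt}=\big(\nu^2\,w^m_z-p\,(\bfh^m+\bfh)\cdot\bfv^m\big)_z+(f^m-f)
\]
in $\Pi_T$, together with the transmission conditions \eqref{15}--\eqref{16} (which are built into the weak formulation). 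Here $\nu$ is the same fixed smooth elastic velocity for all $m$; were $\nu^m$ also varied, an extra forcing term $((\nu^m)^2-\nu^2)u_z$ of exactly the type of $(r^m-r)\bfh_z$ would appear and be handled identically.

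\textbf{Step~2 (energy identity).} As in Lemma~\ref{lem1}, I would insert the time--Steklov averages of $p\,\bfv^m$ and of $w^m_t$, truncated so as to vanish outside $(t_1,t_2)$, as the test functions $\eta,\zeta$ in \eqref{17}--\eqref{18} written for $(\bfv^m,w^m)$, transfer the averaging onto the coefficients, let the averaging parameter tend to $0^+$, multiply the first resulting identity by $p$, and add. Using $((\bfh^m)^2-\bfh^2)_z=2(\bfh^m\cdot\bfv^m_z+\bfv^m\cdot\bfh_z)$ this gives, for almost all $t_1<t_2$,
\[
\begin{aligned}
 &\tfrac12\Big[p\|\bfv^m\|^2_{2,\T}+\|w^m_t\|^2_{2,\T}+\|\nu w^m_z\|^2_{2,\T}\Big]_{t=t_1}^{t=t_2}+p\iinte_{\Pi_{t_1t_2}}r^m(\bfv^m_z)^2\,dzdt\\
 &\quad=\iinte_{\Pi_{t_1t_2}}\!\Big(p(r-r^m)\bfh_z+p(r^m\bfj^m-r\bfj)\Big)\cdot\bfv^m_z\,dzdt+\iinte_{\Pi_{t_1t_2}}(f^m-f)\,w^m_t\,dzdt\\
 &\quad\phantom{=}-p\iinte_{\Pi_{t_1t_2}}(\bfh^m\cdot\bfv^m_z)\,w^m_t\,dzdt-2p\iinte_{\Pi_{t_1t_2}}(\bfv^m\cdot\bfh_z)\,w^m_t\,dzdt\\
 &\quad\phantom{=}+p\iinte_{\Pi_{t_1t_2}}u_t\,\bfv^m\cdot\bfv^m_z\,dzdt,
\end{aligned}
\]
the last three integrals being the coupling terms that survive the cancellation between the $p\iinte(\bfh^m\cdot\bfv^m_z)w^m_t$ contributions of the two equations. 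Since $r^m\geq r_0$, the left-hand side dominates $\tfrac12[\,\cdot\,]_{t_1}^{t_2}+p\,r_0\|\bfv^m_z\|^2_{2,\Pi_{t_1t_2}}$.

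\textbf{Step~3 (estimates, absorption, conclusion).} The first two integrals on the right are handled by Cauchy's inequality with a small parameter: the factors $\bfv^m_z$ and $w^m_t$ are absorbed, respectively, into $p\,r_0\|\bfv^m_z\|^2_{2,\Pi_{t_1t_2}}$ on the left and into $\epsilon(t_2-t_1)\,\mathrm{vrai}\max_{[t_1,t_2]}\|w^m_t\|^2_{2,\T}$, and what remains is exactly $\|(r^m-r)\bfh_z\|^2_{2,\Pi_{t_1t_2}}$, $\|r^m\bfj^m-r\bfj\|^2_{2,\Pi_{t_1t_2}}$ and $\|f^m-f\|^2_{2,\Pi_{t_1t_2}}$, as on the right of \eqref{55}. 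The three coupling integrals are the crux: there I would invoke the $m$--uniform bounds \eqref{49} ($\max_{\Pi_T}|\bfh^m|,\max_{\Pi_T}|\bfh|\leq C_9$, $\ \|\bfh_z\|_{2,\Pi_T}\leq C_9$, and the corresponding bound on $\|u_t\|_{L_\infty(0,T;L_2(\T))}$), the one-dimensional multiplicative inequality used in the existence proof (Theorem~2.2 of \cite{LSU}, cf.~\eqref{33}) applied to the components of $\bfv^m$, and the generalized H\"older inequality \eqref{30}; this bounds each coupling integral by $\epsilon\|\bfv^m_z\|^2_{2,\Pi_{t_1t_2}}$ plus a term $\kappa(t_2-t_1)\big(\mathrm{vrai}\max_{[t_1,t_2]}\|\bfv^m\|^2_{2,\T}+\mathrm{vrai}\max_{[t_1,t_2]}\|w^m_t\|^2_{2,\T}\big)$, where $\kappa(s)\to0$ as $s\to0^+$ and $\kappa$ depends only on $p,r_0,\nu_0,C_9,T$. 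Now take $\mathrm{vrai}\max$ over $t_2\in[t_1,t_1+\delta]$ in the identity of Step~2, move the $t=t_1$ boundary terms to the right-hand side, and choose $\delta>0$ so small that the total coefficient produced by the absorbed $\epsilon$'s and by $\kappa$ in front of $\mathrm{vrai}\max_{[t_1,t_2]}\{p\|\bfv^m\|^2_{2,\T}+\|w^m_t\|^2_{2,\T}\}$ and of $\|\bfv^m_z\|^2_{2,\Pi_{t_1t_2}}$ stays below $\tfrac12$; this is possible uniformly in $m$ because only the $m$--independent constants $p,r_0,\nu_0,C_9,T$ enter. Absorbing those terms into the left and balancing the Cauchy inequalities suitably yields \eqref{55}, with $C_{10}=C_{10}(p,r_0,\nu_0,C_9,T)$, on every slab with $0\leq t_2-t_1<\delta$; carrying out the same argument on an arbitrary $[t_1,t_2]$ with $t_2-t_1<\delta$ (rather than on $[t_1,t_1+\delta]$) gives it for all admissible pairs.

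The hard part is Step~3 --- extracting a genuine positive power of $t_2-t_1$ from the coupling integrals $\iinte u_t\,\bfv^m\cdot\bfv^m_z$ and $\iinte(\bfv^m\cdot\bfh_z)\,w^m_t$, whose coefficients $u_t$ and $\bfh_z$ are only in $L_2(\Pi_T)$; this forces one to trade the $L_\infty$-in-$z$ control of $\bfv^m$ coming from \eqref{49} for the interpolation factors $\|\bfv^m_z\|_{2,\T}$ and $\|\bfv^m\|_{2,\T}$, and, simultaneously, to keep every constant and the threshold $\delta$ independent of $m$, which works precisely because every a priori estimate used is one of the $m$--uniform bounds \eqref{49}.
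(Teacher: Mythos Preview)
Your outline is correct and is exactly the energy argument the paper has in mind: the paper itself does not spell out a proof of Theorem~\ref{lem2} but only writes ``Theorem~\ref{lem2} is proved in much the same way as Lemma~4.2 from \cite{PV1}'', i.e.\ the Steklov-averaged energy identity of Lemma~\ref{lem1} applied to the difference system, followed by a short-time absorption, precisely as in your Steps~1--3.

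The only noticeable methodological difference is how the $L_\infty$-in-$z$ control of $\bfv^m$ is obtained in Step~3. In the closely related uniqueness proof (Theorem~\ref{un}) the paper bounds $\max_{\Pi_T}|\bfv|$ via the parabolic De~Giorgi--Nash estimate (Theorem~8.1 in \cite{LSU}) applied to the linear equation satisfied by $\bfv$, which turns $\max|\bfv|$ into $C\|w_t\|_{2,\Pi_T}$; presumably the argument referenced in \cite{PV1} does the same. You instead use the one-dimensional multiplicative inequality \eqref{33} together with the generalized H\"older inequality \eqref{30} with the exponents $(2,\infty)$, $(\infty,4)$, $(2,4)$ (so that $\|u_t\|_{2,4,\Pi_{t_1t_2}}\le C_8(t_2-t_1)^{1/4}$ supplies the required positive power of $t_2-t_1$). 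Both routes work; yours is more elementary in that it avoids the De~Giorgi--Nash machinery, while the paper's device has the advantage of being dimension-robust and of giving a cleaner intermediate statement ($\max|\bfv^m|$ controlled by the sources). Either way the resulting smallness is $m$-independent because only the uniform bounds \eqref{49} enter, and your remark that a hypothetical variation $\nu^m\neq\nu$ would merely add a data term $((\nu^m)^2-\nu^2)u_z$ is also correct.
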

Theorem \ref{lem2} is proved in much the same way as Lemma 4.2 from \cite{PV1}.
\section*{Acknowledgements}

The first author were partially supported by FAPERJ through the grant E-26/ 111.564/2008 entitled {\sl "Analysis, Geometry and Applications"}, and by the National Counsel of Technological and Scientific Development (CNPq) by the grant 311759/2006-8.

\bigskip


\end{document}